\newtheorem{theorem}{Theorem}[section]
\newtheorem{lemma}[theorem]{Lemma}
\newcommand{\ei}{\boldsymbol{e}_i}
\newcommand{\ej}{\boldsymbol{e}_j}
\newcommand{\bx}{\boldsymbol{x}}
\newcommand{\by}{\boldsymbol{y}}
\newcommand{\bz}{\boldsymbol{z}}
\newcommand{\bm}{\boldsymbol{m}}
\newcommand{\sHoneper}{\sH^1_{\operatorname{per}}}
\newcommand{\dd}{\mathrm{d}}
\newcommand{\R}{\mathbb{R}}
\newcommand{\h}{\boldsymbol{V}}
\newcommand{\I}{\boldsymbol{I}}
\newcommand{\T}{\boldsymbol{T}}
\newcommand{\n}{\boldsymbol{n}}
\newcommand{\nablat}{{{\nabla}_{\Gamma}}}
\newcommand{\sH}{{\rm H}}
\renewcommand{\div}{{\operatorname{div}}}
\newcommand{\refd}{{\operatorname{ref}}}
\newcommand{\isdef}{\mathrel{\mathrel{\mathop:}=}}
\newcommand{\bs}[1]{{\boldsymbol#1}}
\begin{document}
\title[Isogeometric shape optimization for scaffold
structures]{Isogeometric shape optimization for scaffold
structures}
\author{Helmut Harbrecht}
\address{Helmut Harbrecht and Remo von Rickenbach,
Departement f\"ur Mathematik und Informatik, 
Universit\"at Basel, 
Spiegelgasse 1, 4051 Basel, Switzerland.}
\email{\{helmut.harbrecht,remo.vonrickenbach\}@unibas.ch}
\author{Michael Multerer}
\address{
Michael Multerer,
Euler Institute,
USI Lugano,
Via la Santa 1, 6900 Lugano, Svizzera.}
\email{michael.multerer@usi.ch}
\author{Remo von Rickenbach}

\begin{abstract}
The development of materials with specific structural properties 
is of huge practical interest, for example, for medical applications 
or for the development of light weight structures in aeronautics. 
In this article, we combine shape optimization and 
homogenization for the optimal design of the microstructure
in scaffolds. Given the current microstructure, we apply the
isogeometric boundary element method to compute the effective 
tensor and to update the microstructure by using the shape gradient 
in order to match the desired effective tensor. Extensive numerical
studies are presented to demonstrate the applicability and feasibility of
the approach.
\end{abstract}

\keywords{Scaffold structures, isogeometric boundary element method, homogenization, shape optimization}
\subjclass[2010]{49K20, 49Q10, 74Q05}
\maketitle

\section{Introduction}
Many engineering problems amount to 
boundary value problems for an unknown function, which 
needs to be appropriately approximated in 
order to infer some desired quantity of interest. 
If the problem under consideration exhibits different length scales,
homogenization is the method of choice to avoid the expensive numerical 
resolution of the different scales involved.
The idea of homogenization is to replace the 
governing mathematical equations at multiple scales 
by approximate governing equations which only exhibit a 
single scale, see \cite{A,BLP,CIO,T} for example.

Within this approach, the article at hand focuses on
scaffold structures.
Indeed, additive manufacturing allows to build lattices or 
perforated structures and hence to build structures with physical 
properties that vary in space. Assuming a lattice structure of
the material under consideration, one
may compute the effective material tensor on the microscale 
in order to derive an ``effective'' equation on the macroscale.
Taking this consideration as a starting point, we address 
the optimal design of such scaffold structures. This task is 
motivated by the need of scaffold structures which realize
specific properties. The efficient numerical solution of this
problem has, of course, a huge impact on 
many practical applications. Examples range from the development 
of light weight structures in aeronautics to medical implants 
in the orthopedic and dental fields, see e.g.\ \cite{NRD,WXZ} 
and the references therein.

We remark that the optimal design of scaffold structures 
has already been considered in many works, see 
\cite{AOTHH,CCFK,FCHO,HMT,HWGDFS,LKH,LRC,
PFM,RSZ,SSW,S,WK,WWSK} for some of the respective 
results. The methodology used there is mainly based on 
the forward simulation of the material properties of a given 
microstructure. Whereas, sensitivity analysis has been 
used in \cite{AGP,GAP} to compute the derivatives with 
respect to the side lengths and the orientation of rectangular 
cavities. In \cite{HRLS}, the derivatives with respect to the coefficients 
of a B-spline parametrization of the cavity have been computed.
In \cite{NC}, the shape derivative has been derived in the context 
of a level set representation of the inclusion. Rather than pursuing
one of the aforementioned approaches, we follow
\cite{DH,HADA} and discretize Hadamard's shape gradient directly. 
Shape sensitivity analysis is a powerful tool as it gives explicit knowledge
about the dependency of the functional under consideration on shape 
perturbations. As a consequence, we are able to perform a gradient 
based optimization of the scaffold structure.

Shape optimization has been proven an efficient tool for 
designing structures, which should be constructed with respect 
to certain optimal design considerations. Having the shape gradient at hand, it 
can be applied to optimize the structure under consideration 
with respect to the underlying shape functional, see \cite{ZOL1,MS,
PIR,SI,ZOL2} and the references therein for an overview 
on the topic of shape optimization. Note that shape optimization
falls into the general setting of optimal control of partial differential 
equations.

For the numerical realization of our approach, we adopt the
 \emph{isogeometric analysis} (IGA) framework. IGA has been
introduced in  \cite{HCB05} in order to incorporate simulation techniques 
into the design workflow of industrial development and thus allows to deal
with domain deformations in a straightforward manner. Especially, 
by representing the computational geometry and deformation fields by 
\emph{non-uniform rational B-splines} (NURBS), shapes can easily
be deformed by directly updating the NURBS mappings which are used to
represent the shape under consideration. The particular class of shape
deformations considered within this work are the eigenfunctions of a
prescribed covariance kernel. This way, we determine principal
displacement fields that are tailored to the underlying geometry.
In particular, this approach is independent of the
geometry's smoothness or genus, 
as we illustrate within our numerical experiments.

The cell functions, which are required for the computation of
the effective tensor, are determined by means of boundary integral
equations. Especially, we show that all computations can be conducted at
the boundary of the cavity, including the evaluation of the shape functional
and the shape gradient. This especially allows for dealing with large
deformations  without having to update any volume mesh. For numerical 
computations, we rely on the fast isogeometric boundary element 
method, developed in \cite{DHK+20,DHK+18,DHP16,HP13}, which 
is available as \verb|C++|-library \verb+bembel+ \cite{DHK+20,bembel}. 
In order to speed up computations, \verb+bembel+ employs 
$\mathcal{H}^2$-matrices with the interpolation based
fast multipole method \cite{GR87,GR91,HB02}.
 
This article is structured as follows. In Section~\ref{sec:problem},
we briefly recall the fundamentals of homogenization theory and
introduce the problem under consideration. We especially present
the shape gradient which enables us to find the optimal effective tensor 
by means of a gradient based optimization method. The topic of 
Section~\ref{sct:IGABEM} is the reformulation of the cell problem
by means of boundary integral equations. This amounts to the so-called 
Neumann-to-Dirichlet map, where the Green's function for the 
periodic Laplacian is computed along the lines of \cite{BG,CZ}. 
After having solved the 
Neumann-to-Dirichlet map, both, it suffices to evaluate 
the shape functional and the 
shape gradient exclusively at the boundary of 
the cavity. Section~\ref{sct:discretized} is dedicated to the 
discretization of the shape optimization problem, which comprises
the discretization of the shape and the discretization of the 
Neumann-to-Dirichlet map. Namely, we show how to define 
shape deformations by using the eigenfunctions of a prescribed 
covariance kernel. We also summarize the solution of
the Neumann-to-Dirichlet map by the isogeometric boundary
element method. Various numerical experiments are then performed
in Section~\ref{sct:numerix}. We consider simple cavities like
the sphere and the cube, a multiple cavity consisting of a 
sphere and a cube, and finally a drilled cube which is a cavity
of genus 12. Finally, in Section~\ref{sct:conclusion}, we state 
concluding remarks.

\section{Shape optimization for scaffolds}\label{sec:problem}
\subsection{Homogenization}
We start by outlining the
 approach considered in this article. To this end,
we shall restrict ourselves to the
situation of the simple two-scale 
problem posed on a domain $D\subset\mathbb{R}^3$, i.e.\
\begin{equation}\label{homo1}
-\div\big(\boldsymbol{A}^\varepsilon\nabla u^\varepsilon\big) = f\ \text{in $D$},
\quad u^\varepsilon=0\ \text{on $\partial D$}.
\end{equation}
Herein, the $(3\times 3)$-matrix $\boldsymbol{A}^\varepsilon$ is 
assumed to be oscillatory in the sense that
\[
\boldsymbol{A}^\varepsilon(\bx) = \boldsymbol{A}\biggl({\bx\over\varepsilon}\biggr),
	\quad \bx\in D.
\]
Mathematical homogenization is the study of the limit of 
$u^\varepsilon$ when $\varepsilon$ tends to $0$. Various 
approaches have been developed for this purpose. The oldest one 
is comprehensively presented in Bensoussan, Lions and Papanicolaou 
\cite{BLP}. It consists of performing a formal multiscale asymptotic expansion 
and then, in the justification of its convergence, using the energy 
method due to Tartar \cite{T}. A significant result obtained 
from this approach is the existence of the ($L^2(D)$-) limit 
$u_0(\bx)$ of $u^{\varepsilon}(\bx)$ and, more importantly, 
the identification of a limiting, ``effective'' or ``homogenized'' 
elliptic problem in $D$ satisfied by $u_0$.

\begin{figure}[hbt]
\begin{center}
\setlength{\unitlength}{0.8cm} 
\begin{picture}(13,7.2)
\put(0.015,0.02){\includegraphics[width=4.96cm,height=4.96cm]{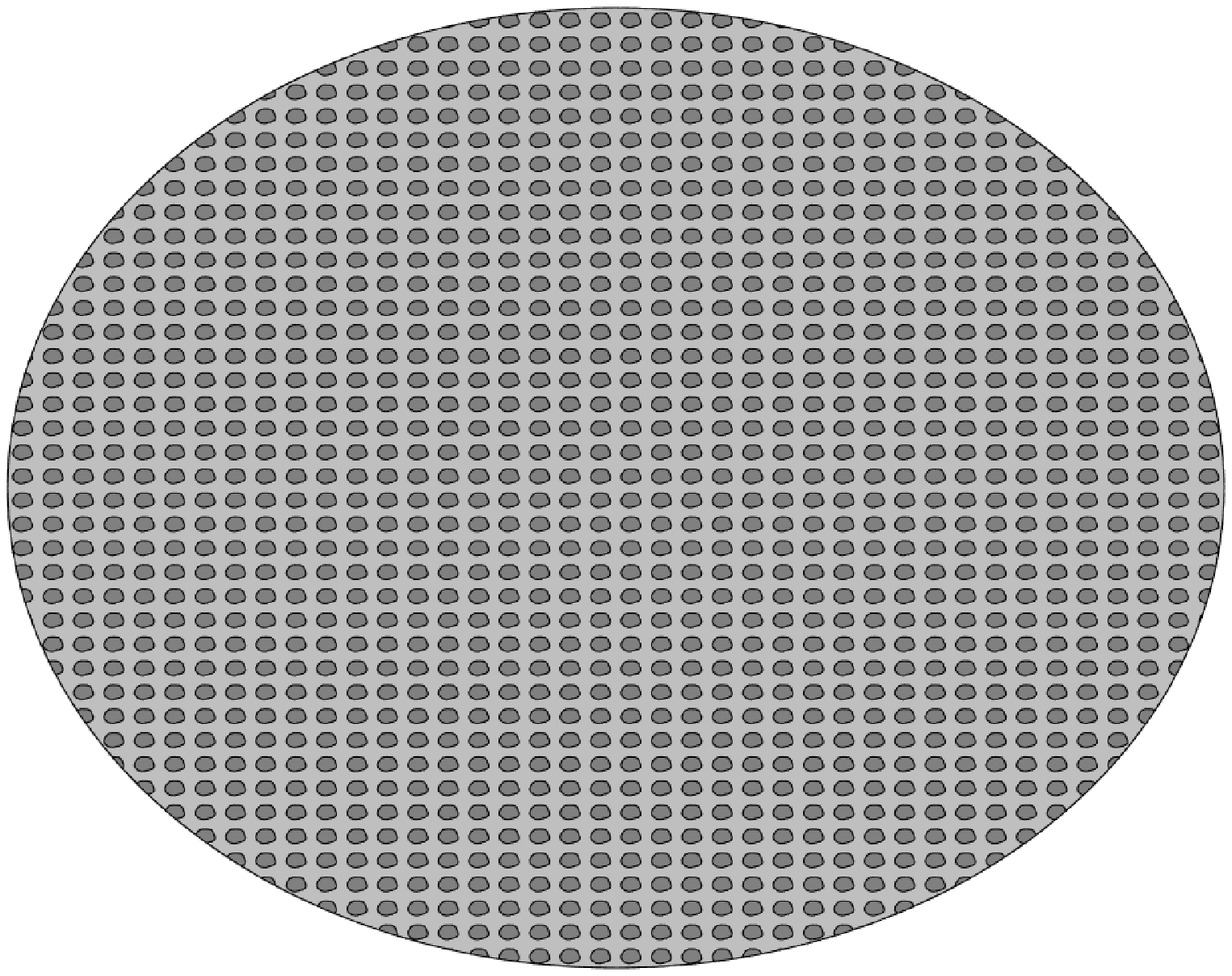}}
\put(2.975,2.975){\line(1,0){0.15}}
\put(2.975,2.975){\line(0,1){0.15}}
\put(2.975,3.125){\line(1,0){0.15}}
\put(3.125,2.975){\line(0,1){0.15}}
\put(2.975,3.125){\line(3,2){4.23}}
\put(2.975,2.975){\line(3,-2){4.23}}
\put(7.2,0.13){\includegraphics[width=4.66cm,height=4.66cm]{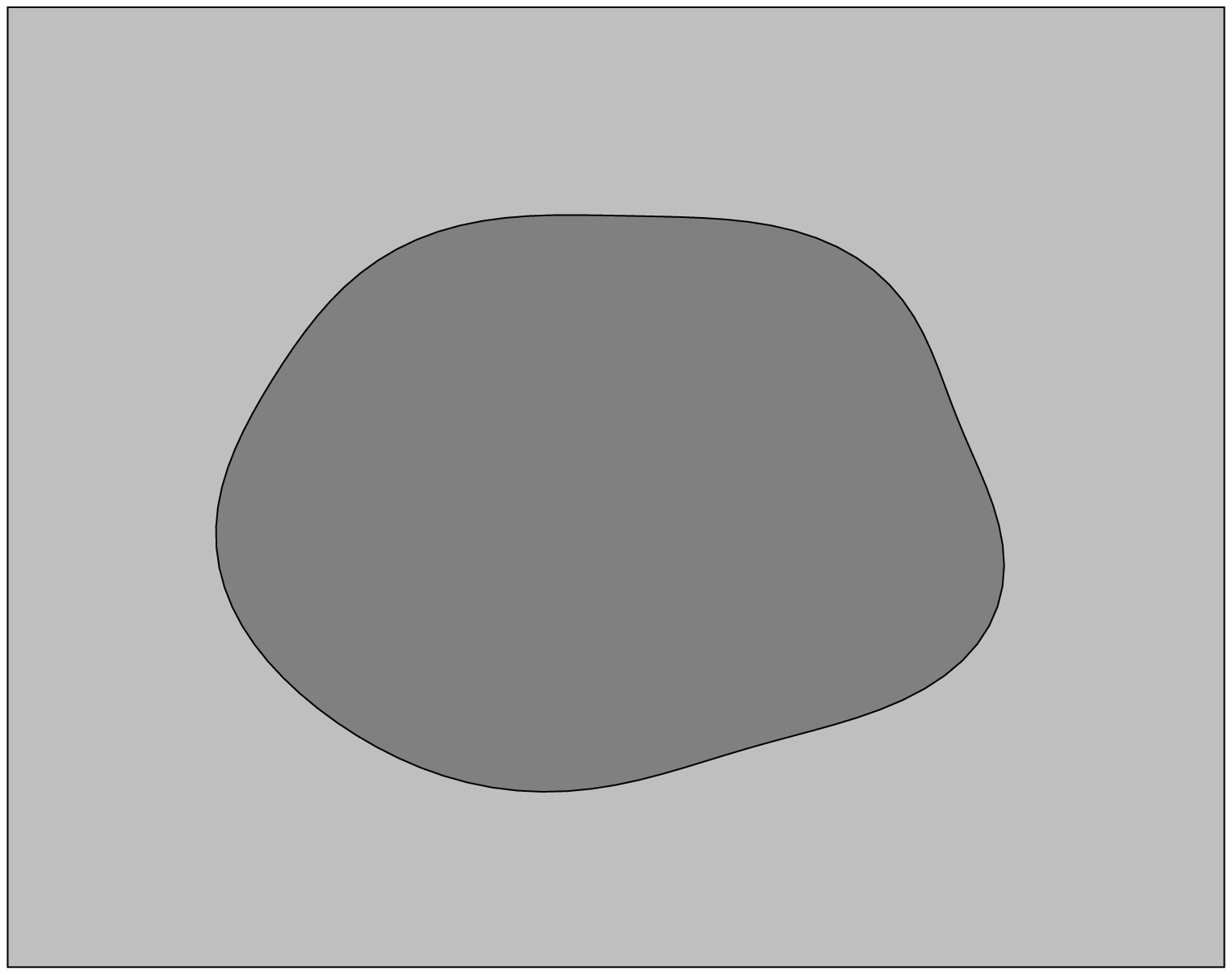}}
\qbezier[1000](3.125,3.125)(7.2,4.3)(7.2,4.3)
\qbezier[1000](3.125,2.975)(7.2,1.8)(7.2,1.8)
\put(7.2,0.13){\line(0,1){5.8}}
\put(7.2,0.13){\line(1,0){5.8}}
\put(13,0.13){\line(0,1){5.8}}
\put(7.2,5.95){\line(1,0){5.8}}
\put(0.5,5.2){$D$}
\put(7.3,5.5){$\partial Y$}
\put(10,3){$\Omega$}
\put(9,4){$\Gamma$}
\put(11.9,4.9){$Y$}
\end{picture}
\caption{
The domain $D$ with unit cell ${Y}$.}
\label{fig:setup}
\end{center}
\end{figure}

We introduce the unit cell $Y=[-1/2, 1/2]^3$ for the fast scale 
of problem \eqref{homo1} and assume that the matrix 
field $\boldsymbol{A}(\by)$ is $Y$-periodic, cf.~Figure
\ref{fig:setup} for a graphical illustration. Moreover, we 
consider the space $\sHoneper(\by)$ of $Y$-periodic 
functions with vanishing mean that belong to $\sH^1(\by)$ 
and the unit vector $\ei \in \R^3$ in the $i$-th direction of 
$\R^3$. We define the cell problems for $i=1,2,3$
according to
\begin{align*}
&\text{find $w_i\in\sHoneper(\by)$ such that}\\
&\qquad-\div\big(\boldsymbol{A}(\by)(\ei+\nabla w_i)\big) = 0.
\end{align*}
The Lax-Milgram lemma ensures the existence and 
uniqueness of the solutions $w_i$ to these cell problems
for $i=1,2,3$.

The family of functions $w_i$ then determines the 
\emph{effective tensor} 
\[\boldsymbol{A}_0 = [a_{i,j}]_{i,j=1}^3
\]
in accordance with
\[
a_{i,j} = \int_Y \langle\boldsymbol{A}(\ei +\nabla w_i),\ej + \nabla w_j\rangle\dd\by.
\]
Based on this tensor, we may compute the
\emph{homogenized solution\/} $u_0\in
\sH_0^1(D)$ by means of the boundary value problem
\[
-\div\big(\boldsymbol{A}_0\nabla u_0\big)  = f\ \text{in}\ D,
\quad u_0 = 0\ \text{on}\ \partial D.
\]
In particular, by setting
\[
  u_1(\bx,\by) = \sum_{i=1}^3 \frac{\partial u_0}{\partial x_i}(\bx)w_i(\by),
  	\quad (\bx,\by)\in D\times Y,
\]
there holds the error estimate
\[
  \bigg\|u^\varepsilon(\bx)-u_0(\bx)-\varepsilon u_1\bigg(\bx,{\bx\over\varepsilon}\bigg)\bigg\|_{H^1(D)}
  	\leq c\sqrt{\varepsilon}\to 0\ \text{as $\varepsilon\to 0$}
\]
 for some constant \(c>0\), cf.~\cite{A,N89}.

\subsection{Scaffold structures}
From now on, we shall consider a scaffold structure.
To this end, we assume that
 the unit cell $Y = [0,1]^3$ is comprised of a 
 homogeneous material which contains some cavity
 $\Omega$. More precisely, let $\Omega$ be an open subset of $Y$ and
set $\Gamma\isdef\partial\Omega$. The collection of interior 
boundaries, being translates of $\varepsilon\Gamma$ of the 
macroscopic domain $D^\varepsilon$, is denoted by $\partial 
D_{\text{int}}^\varepsilon$ while the remainder of the boundary 
$\partial D^\varepsilon\setminus\partial D_{\text{int}}^\varepsilon$ 
is denoted by $\partial D_{\text{ext}}^\varepsilon$. The current 
situation is illustrated in Figure~\ref{fig:setup}.

In accordance with \cite{CIO}, we consider the boundary 
value problem
\begin{equation}\label{homo2}
\begin{aligned}
-\div\big(\boldsymbol{A}^\varepsilon\nabla u^\varepsilon\big) &= f\ &&\text{in $D^\varepsilon$},\\
\langle\boldsymbol{A}^\varepsilon\nabla u^\varepsilon,\n\rangle &=0\ 
&&\text{on $\partial D_{\text{ext}}^\varepsilon$},\\
u^\varepsilon&=0\ &&\text{on $\partial D_{\text{int}}^\varepsilon$}.
\end{aligned}
\end{equation}
Here, we have $\boldsymbol{A}^\varepsilon = 
\boldsymbol{A}(\cdot/\varepsilon)$ in $D^\varepsilon$ with
$\boldsymbol{A} = \boldsymbol{I}$ in $Y\setminus\overline{\Omega}$.
Moreover, the surface $\Gamma$ of the cavity $\Omega$ 
is assumed to be oriented such that its normal vector $\n$ 
indicates the direction going from the interior of $\Omega$ 
to the exterior $Y\setminus\overline{\Omega}$.

To derive the homogenized problem, one introduces the cell functions 
$w_i\in\sHoneper(Y\setminus\overline{\Omega})$ which are now 
given by the Neumann boundary value problems
\begin{equation}\label{cell:problem}
\begin{aligned}
  \Delta w_i &= 0 &&\quad \text{in $Y\setminus\overline{\Omega}$},\\
   \partial_{\n}w_i &= -\langle \n,\ei\rangle &&\quad \text{on $\Gamma$}.
\end{aligned}
\end{equation}
The homogenized equation becomes
\[
-\div\big(\boldsymbol{A}_0(\Omega)\nabla u_0\big)  = (1-|\Omega|) f\ \text{in}\ D,
\quad u_0 = 0\ \text{on}\ \partial D.
\]
Here, the domain $D$ coincides with $D^\varepsilon$ 
except for the holes. The effective tensor $\boldsymbol{A}_0(\Omega) 
= [a_{i,j}(\Omega)]_{i,j=1}^3$ is now given by
\begin{equation}\label{equi:tensor}
a_{i,j}(\Omega) =\int_{Y\setminus\overline{\Omega}}
	\langle\ei +\nabla w_i,\ej + \nabla w_j\rangle\dd\by,
\end{equation}
compare \cite{CIO}. Notice that $\boldsymbol{A}_0(\Omega)$ 
is a symmetric matrix. However, it is not the identity in general, since 
the geometry of the cavity induces a global anisotropy.

\subsection{Shape calculus}
Considering a given tensor $\boldsymbol{B}
\in\mathbb{R}_{\text{sym}}^{3\times 3}$ which describes
the desired 
material properties, we my ask the following question: 
Can we find a cavity, i.e.\ a domain $\Omega$, such 
that the effective tensor is as close as possible to 
$\boldsymbol{B}$? 
 
In order to make the notion of closeness between matrices
precise, we choose the Frobenius norm 
of matrices and define the shape functional $J(\Omega)$ 
according to
\begin{equation}\label{eq:functional}
J(\Omega) = \cfrac{1}{2}\|\boldsymbol{A}_0(\Omega)-\boldsymbol{B}\|_{F}^2 
	= \frac{1}{2} \sum_{i,j=1}^3 \big(a_{i,j}(\Omega)-b_{i,j}\big)^2.
\end{equation}
We mention, however, that it cannot be expected that 
every tensor can be matched by a corresponding scaffold structure. 

In order to solve the corresponding minimization problem
\[
J(\Omega)\to\min,
\] we
shall employ shape optimization. We are hence interested in 
describing how the effective tensor depends on a given deformation 
field, which acts on the shape of the cavity $\Omega$. To this end, 
we introduce the displacement field 
$\h\colon Y\to Y$ that vanishes at the boundary $\partial Y$ of the 
reference cell but whose action may deform the interior boundary
$\Gamma$. The deformation field is then a perturbation of identity $\T_t=\I+t\h$, 
which is a diffeomorphism for sufficiently small values $t>0$ and preserves $Y$. 
We denote by $\Omega_t =\T_t(\Omega)$ and by $w_i(t)\in\sHoneper(\by)$ 
the solution of \eqref{cell:problem} for the cavity $\Omega(t)$. Then, 
the shape derivative of a given shape functional $J(\Omega)$ is 
defined according to
\begin{equation}\label{eq:derivative_def}
  J'(\Omega)[\h] = \lim_{t\to 0}\frac{J(\Omega_t)-J(\Omega)}{\varepsilon}.
\end{equation}

The shape derivative \eqref{eq:derivative_def}
with respect to the functional \eqref{eq:functional}
has been computed in \cite{DH} and is given by
\begin{equation}\label{eq:gradient}
  J'(\Omega)[\h] = \sum_{i,j=1}^3\big(a_{i,j}(\Omega)-b_{i,j}\big)
  	a_{i,j}'(\Omega)[\h],
\end{equation}
where the coefficients $a_{i,j}'(\Omega)[\h]$ are given by
\begin{equation}\label{shape:derivative}
a_{i,j}'(\Omega)[\h] = -\int_{\Gamma}\langle\nablat\phi_i,
 	\nablat\phi_j\rangle\langle\h,\n\rangle\dd o
		\quad \text{with $\phi_i = x_i + w_i$}.
\end{equation}

\section{Boundary integral equation for the cell problem} 
\label{sct:IGABEM}
\subsection{Green's function} 
In order to numerically solve the cell problem \eqref{cell:problem}, 
we shall recast it as a boundary integral equation. To that
end, we have first to determine the $Y$-periodic kernel function
$k_{per}(\bz)$ which satisfies $-\Delta k_{\text{per}}(\bz) = \delta_{\bs 0}
(\bz)$. We follow \cite{BG,CZ} and make the ansatz
\[
  k_{\text{per}}(\bz) =  \frac{1}{4\pi}\sum_{\bm\in\{-1,0,1\}^3}
  	\frac{1}{\|\bz-\bm\|} + \frac{\|\bz\|^2}{6} + k_{\text{corr}}(\bz),
\]
where the correction $k_{\text{corr}}(\bz)$ is given by
\begin{equation}\label{eq:correction}
  k_{\text{corr}}(\bz) = \sum_{n=0}^\infty
  \sum_{\ell=-n}^n \alpha_{n,\ell} R_{\ell}^n(\bz).
\end{equation}
Here, $\{R_{\ell}^n\}$ denote the (regular) solid harmonics
and the coefficients $\{\alpha_{n,\ell}\}$ are chosen such that 
$k_{\text{per}}(\bz)$ is $Y$-periodic. In practice, we truncate the 
expansion after $N$ terms and compute the coefficients 
$\alpha_{n,\ell}$ by minimizing the deviation of $k_{\text{per}}$ 
from a $Y$-periodic function. The necessary truncation rank 
to achieve machine precision is rather small. For example,
we use $N = 12$ to obtain an accuracy of about $5\cdot 10^{-8}$.
The coefficients 
$\alpha_{n,\ell}$ can of course be computed in an offline phase
and may then be tabulated for later use.


\subsection{Neumann-to-Dirchlet map} 
Having the periodic Green's function at hand, we define the corresponding
single layer operator $\mathcal{V}$ in accordance with
\begin{equation}\label{eq:SL}
  \mathcal{V}\colon\sH^{-1/2}(\Gamma)\to\sH^{1/2}(\Gamma),\quad
  (\mathcal{V}w)(\bx) = \int_\Gamma k_{\text{per}}(\bx-\by)
  	w(\by)\dd o_{\by}
\end{equation}
and the double layer operator $\mathcal{K}$ 
in accordance with
\begin{equation}\label{eq:DL}
  \mathcal{K}\colon\sH^{1/2}(\Gamma)\to\sH^{1/2}(\Gamma),\quad
  (\mathcal{K}w)(\bx) = \int_\Gamma \frac{\partial k_{\text{per}}}{\partial\n_{\by}}(\bx-\by)
  	w(\by)\dd o_{\by}.
\end{equation}
Given the Neumann data $-\langle \n,\ei\rangle$
of the cell function $w_i\in\sHoneper(Y\setminus\overline{\Omega})$ 
at the boundary $\Gamma$, the respective Dirichlet data 
are given by the solution of the boundary integral equation
\begin{equation}\label{eq:N2D}
  \bigg(\frac{1}{2}-\mathcal{K}\bigg)w_i = \mathcal{V}\langle \n,\ei\rangle.
\end{equation}
This (exterior) Neumann-to-Dirichlet map is known to be 
uniquely solvable.

\subsection{Evaluating the shape functional and its gradient} 
In this section, we present the formulas to compute the effective 
tensor $\boldsymbol{A}_0(\Omega) = [a_{i,j}(\Omega)]_{i,j=1}^3$ 
given by \eqref{equi:tensor} and its shape derivative
\eqref{shape:derivative} from the Dirichlet and Neumann data
of the the cell function.

\begin{lemma}\label{lem:coefficient}
The effective tensor $\boldsymbol{A}_0(\Omega) = 
[a_{i,j}(\Omega)]_{i,j=1}^3$ from \eqref{equi:tensor} 
satisfies the identity
\[
 a_{i,j}(\Omega) = \delta_{i,j} |Y\setminus\overline{\Omega}| - \int_\Gamma w_i \langle\ej,\n\rangle \dd o.
\]
\end{lemma}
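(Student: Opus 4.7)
The plan is to expand the integrand of \eqref{equi:tensor} and reduce the three resulting ``interesting'' volume integrals to boundary integrals on $\Gamma$, using integration by parts together with the cell problem \eqref{cell:problem} and the periodicity of $w_i, w_j$.

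First I would write
\[
a_{i,j}(\Omega) = \int_{Y\setminus\overline{\Omega}} \langle \ei,\ej\rangle\,\dd\by
   + \int_{Y\setminus\overline{\Omega}} \partial_i w_j\,\dd\by
   + \int_{Y\setminus\overline{\Omega}} \partial_j w_i\,\dd\by
   + \int_{Y\setminus\overline{\Omega}} \langle\nabla w_i,\nabla w_j\rangle\,\dd\by.
\]
The first term immediately gives $\delta_{i,j}|Y\setminus\overline{\Omega}|$. For the second and third terms I would apply the divergence theorem on $Y\setminus\overline{\Omega}$. Since the outward unit normal of $Y\setminus\overline{\Omega}$ on $\Gamma$ equals $-\n$ (recall that $\n$ points from $\Omega$ into $Y\setminus\overline{\Omega}$), and since the $Y$-periodicity of $w_i,w_j$ makes opposite faces of $\partial Y$ cancel, one obtains
\[
\int_{Y\setminus\overline{\Omega}} \partial_j w_i\,\dd\by = -\int_\Gamma w_i\langle\ej,\n\rangle\,\dd o,
\qquad
\int_{Y\setminus\overline{\Omega}} \partial_i w_j\,\dd\by = -\int_\Gamma w_j\langle\ei,\n\rangle\,\dd o.
\]

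For the fourth term I would exploit the weak formulation of \eqref{cell:problem}. Testing against $w_j\in\sHoneper(Y\setminus\overline{\Omega})$ (which is admissible because it is periodic), integrating by parts, using $\Delta w_i=0$ together with the periodicity-induced cancellation on $\partial Y$, and inserting the Neumann datum $\partial_\n w_i=-\langle\n,\ei\rangle$ on $\Gamma$, I get
\[
\int_{Y\setminus\overline{\Omega}} \langle\nabla w_i,\nabla w_j\rangle\,\dd\by
   = \int_\Gamma w_j\,\langle\ei,\n\rangle\,\dd o.
\]
This identity in particular, by the same argument with the roles of $i$ and $j$ reversed, yields the symmetry relation $\int_\Gamma w_j\langle\ei,\n\rangle\,\dd o=\int_\Gamma w_i\langle\ej,\n\rangle\,\dd o$.

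Combining the four contributions, the fourth term exactly cancels the integral $-\int_\Gamma w_j\langle\ei,\n\rangle\,\dd o$ coming from the second term, leaving
\[
a_{i,j}(\Omega) = \delta_{i,j}|Y\setminus\overline{\Omega}| - \int_\Gamma w_i\langle\ej,\n\rangle\,\dd o,
\]
as claimed. The only subtle point is bookkeeping of the sign of the normal on $\Gamma$ when applying the divergence theorem on the perforated cell $Y\setminus\overline{\Omega}$; everything else is a routine application of periodicity and the weak form of the cell problem.
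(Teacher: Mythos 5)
Your proposal is correct and follows essentially the same route as the paper: expand the integrand of \eqref{equi:tensor} into four terms, reduce the mixed terms and the gradient term to boundary integrals over $\Gamma$ via the divergence theorem, periodicity, and the Neumann datum $\partial_{\n}w_i=-\langle\n,\ei\rangle$, and observe the cancellation. Your careful bookkeeping of the sign of $\n$ as the inward normal of $Y\setminus\overline{\Omega}$ is exactly the point the paper's proof also relies on.
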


\begin{proof}
We will discuss the four terms of the 
effective tensor
\[
a_{i,j}(\Omega) = \int_{Y\setminus\overline{\Omega}}
	\big\{\langle\ei,\ej\rangle + \langle\nabla w_i,\ej\rangle + \langle\ei,\nabla w_j\rangle
	+ \langle\nabla w_i,\nabla w_j\rangle\big\}\dd\by.
\]
separately. For the first term, it holds
\[
 \int_{Y\setminus\overline{\Omega}}
	\langle\ei,\ej\rangle\dd\by = \delta_{i,j} |Y\setminus\overline{\Omega}|.
\]
Then, integration by parts gives
\[
  \int_{Y\setminus\overline{\Omega}}
	\langle\nabla w_i,\nabla w_j\rangle\dd\by 
  = -\int_\Gamma \frac{\partial w_i}{\partial\n} w_j \dd o
  = \int_\Gamma \langle \n,\ei\rangle w_j \dd o,
\]
where we used the $Y$-periodicity of $w_i$ and
$w_j$. Likewise, observing that $\div(\ej)=0$, we find 
for the mixed terms by integration by parts that
\[
  \int_{Y\setminus\overline{\Omega}}
	\langle\nabla w_i,\ej\rangle\dd\by 
= - \int_\Gamma w_i \langle\ej,\n\rangle \dd o.
\]
The assertion is now obtained by adding up the different terms.
\end{proof}

For the shape derivative of the effective tensor,
we find the following expression.

\begin{lemma}\label{lem:dcoefficient}
The shape derivative of the effective tensor
 \(\boldsymbol{A}_0(\Omega)\) from \eqref{equi:tensor} is given by
\[
  a_{i,j}'(\Omega)[\h] = \int_{\Gamma} \big\{\langle\ei+\nablat w_i,\ej+\nablat w_j\rangle
 	-\langle\ei,\n\rangle\langle\ej,\n\rangle\big\}\langle\h,\n\rangle\dd o.
\]
\end{lemma}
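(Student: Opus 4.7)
The plan is to derive the claim as a purely algebraic consequence of the shape derivative formula \eqref{shape:derivative} and the Neumann boundary condition satisfied by the cell functions. No additional shape calculus is required beyond \eqref{shape:derivative} itself; the whole proof reduces to rewriting the integrand \(\langle\nablat\phi_i,\nablat\phi_j\rangle\) in terms of quantities tangential to \(\Gamma\).

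First, I would exploit the Neumann condition \(\partial_{\n}w_i = -\langle\ei,\n\rangle\) to observe that \(\langle\nabla\phi_i,\n\rangle = \langle\ei,\n\rangle + \partial_{\n}w_i = 0\) on \(\Gamma\). Hence the full gradient of \(\phi_i\) is already tangential, so \(\nablat\phi_i = \nabla\phi_i = \ei + \nabla w_i\) on \(\Gamma\). I would then split \(\nabla w_i\) into its tangential and normal parts on \(\Gamma\),
\[
  \nabla w_i = \nablat w_i + (\partial_{\n}w_i)\n = \nablat w_i - \langle\ei,\n\rangle\n,
\]
which combined with the previous identity gives the convenient representation
\[
  \nablat\phi_i = \ei + \nablat w_i - \langle\ei,\n\rangle\n \qquad \text{on } \Gamma.
\]

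With this representation in hand, I would expand the pointwise inner product \(\langle\nablat\phi_i,\nablat\phi_j\rangle\). Using \(\langle\nablat w_i,\n\rangle = 0\) (tangentiality) and \(\langle\ei+\nablat w_i,\n\rangle = \langle\ei,\n\rangle\), the four resulting cross terms involving \(\n\) telescope into a single product, yielding the algebraic identity
\[
  \langle\nablat\phi_i,\nablat\phi_j\rangle = \langle\ei+\nablat w_i,\ej+\nablat w_j\rangle - \langle\ei,\n\rangle\langle\ej,\n\rangle \qquad \text{on } \Gamma.
\]
Substituting this into \eqref{shape:derivative} then produces the formula claimed in the lemma. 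There is no analytic difficulty: the only potential pitfall is bookkeeping in the inner-product expansion and carefully tracking the signs introduced by the Neumann condition, so the main obstacle is purely algebraic rather than conceptual.
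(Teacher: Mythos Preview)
Your proposal is correct and follows essentially the same route as the paper: both reduce the integrand to the pointwise expression \(\nablat\phi_i = \ei + \nablat w_i - \langle\ei,\n\rangle\n\) on \(\Gamma\) and then expand the inner product using \(\langle\nablat w_i,\n\rangle=0\). The only minor difference is that the paper obtains this representation directly from \(\nablat x_i = \ei - \langle\ei,\n\rangle\n\) without invoking the Neumann condition, whereas you route through \(\nabla\phi_i\) and the boundary condition to reach the same point.
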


\begin{proof}
Since $\phi_i = x_i+ w_i$, we find
\[
  a_{i,j}'(\Omega)[\h] = \int_{\Gamma}
     \big\langle \ei-\langle\ei,\n\rangle\n+\nablat w_i,
 	\ej-\langle\ej,\n\rangle\n+ \nablat w_j\big\rangle\langle\h,\n\rangle\dd o.
\]
First, we note that the normal $\n$ and the tangential
gradient $\nabla_\Gamma w_i$ are perpendicular to 
each other, i.e.\ $\langle \nabla_\Gamma w_i,\n\rangle 
= \langle\n,\nablat w_j\rangle = 0$. Moreover, we conclude
\[
\big\langle \ei-\langle\ei,\n\rangle\n,\ej-\langle\ej,\n\rangle\n\big\rangle
  = \langle \ei,\ej\rangle - \langle\ei,\n\rangle\langle\ej,\n\rangle.
\]
Therefore, we get the desired result.
\end{proof}

\section{Discretization}\label{sct:discretized}
\subsection{Parametric representation of the reference shape} 
Our goal is to compute the deformation of a given reference
boundary $\Gamma_\refd$
such that the effective tensor $\boldsymbol{A}_0(\Omega)$
corresponds to the desired one $\boldsymbol{B}$. In the following, 
we will assume the usual isogeometric setting for the reference 
boundary $\Gamma_\refd$. To this end, let the unit square be 
denoted by $\square \isdef [0,1]^2$ and assume that the reference 
boundary $\Gamma_\refd$ is represented by a regular and 
non-overlapping decomposition into smooth, quadrangular 
\emph{patches}
\[
\Gamma_\refd = \bigcup_{i=1}^M \Gamma_{\refd}^{(i)}.
\]
Here, regular and non-overlapping means that the intersection 
$\Gamma_{\refd}^{(i)}\cap \Gamma_{\refd}^{(i')}$ of two patches 
consists at most of a common vertex or a common edge for \(i\neq 
i^\prime\). 

Following the paradigm of isogeometric analysis, each patch 
$\Gamma_{\refd}^{(i)}$ is represented by an 
invertible spline or NURBS mapping
\begin{equation}\label{eq:parametrization}
{\bs s}_i\colon\square\to\Gamma_{\refd}^{(i)}
\quad\text{with}\quad\Gamma_{\refd}^{(i)} = {\bs s}_i(\square)
\quad\text{for}\ i = 1,2,\ldots,M.
\end{equation}
We especially follow the common convention that
parametrizations with a common edge coincide except 
for orientation.

\begin{figure}[htb]
	\begin{center}
		\begin{tikzpicture}[
		scale=.3,
		axis/.style={thick, ->, >=stealth'},
		important line/.style={thick},
		every node/.style={color=black}
		]
		\draw (-.5,-1)node{{$0$}};
		\draw (8,-1)node{{$1$}};
		\draw (-.5,8)node{{$1$}};
		\draw (0,0)--(8,0);
		\draw (0,1)--(8,1);
		\draw (0,2)--(8,2);
		\draw (0,3)--(8,3);
		\draw (0,4)--(8,4);
		\draw (0,5)--(8,5);
		\draw (0,6)--(8,6);
		\draw (0,7)--(8,7);
		\draw (0,8)--(8,8);
		\draw (0,0)--(0,8);
		\draw (1,0)--(1,8);
		\draw (2,0)--(2,8);
		\draw (3,0)--(3,8);
		\draw (4,0)--(4,8);
		\draw (5,0)--(5,8);
		\draw (6,0)--(6,8);
		\draw (7,0)--(7,8);
		\draw (8,0)--(8,8);
		\draw (3,3)node(N1){};
		\draw (20.7,3)node(N2){};
		\draw
		(22.5,4)node{\includegraphics[scale=0.37,clip=true,trim=0 0 0 0]{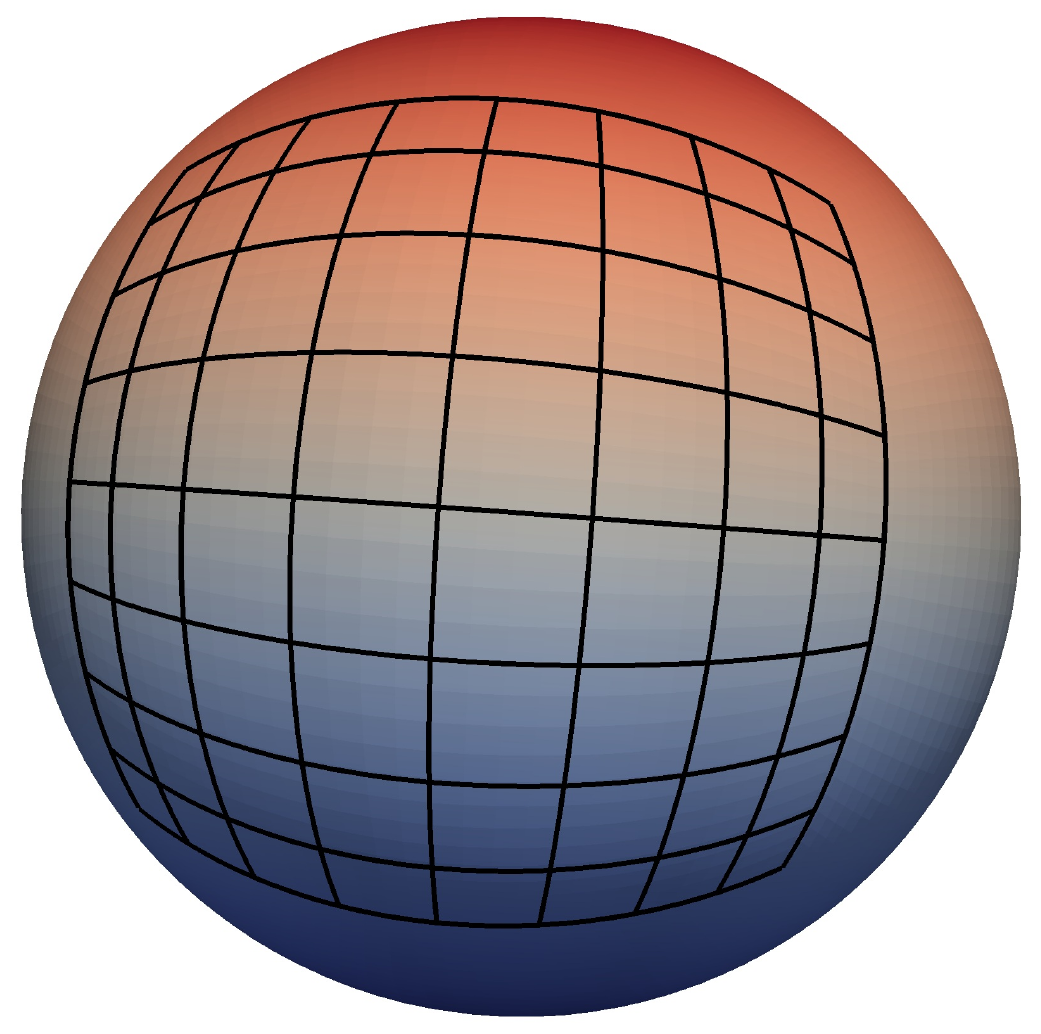}};
		\draw (11.7,6.4)node(N3){${\bs s}_i$};
		\draw (20.6,5.1)node(N4){{\small${\Gamma}_\refd^{(i)}$}};
		\path[->,line width=1pt]  (N1) edge [bend left] (N2);
		\end{tikzpicture}
	\end{center}
	\caption{Surface representation and mesh generation.}
	\label{fig:mesh}
\end{figure}

A mesh of level $j$ on $\Gamma$ is finally
induced by dyadic subdivisions of depth $j$ of the unit 
square into $4^j$ squares. We denote these squares by
\[
\square_{k,k'}^{(j)}\isdef[2^{-j}k,2^{-j}(k+1)]
\times [2^{-j}k',2^{-j}(k'+1)].
\]
   This generates $4^jM$ 
{\itshape elements\/} (or elementary domains), 
compare Figure~\ref{fig:mesh}.

\subsection{Shape discretization} 
By assumption, the NURBS patches under consideration
are smooth. As a consequence, 
we may reinterpolate them by using piecewise polynomials
in order to speed up computations in the optimization and
update procedures.
More precisely, given 
the vertices of the subdivided unit square
\[
{\bs\xi}_{\ell,\ell'}\isdef 2^{-j}
\begin{bmatrix}\ell\\\ell'\end{bmatrix}
,\quad \ell,\ell'=0,1,
\ldots 2^j,
\]
we represent the element
\[
\Gamma_{i,j,k,k'}\isdef{\bs s}_i\big(\square_{k,k'}^{(j)}\big)
\]
by piecewise interpolating the parametrization \({\bs s}_i\) 
by tensor product polynomials of degree \({\bs p}=(p_1,p_2)\). 
To that end, we introduce the vertices
\[
{\bs s}_i({\bs\xi}_{\ell,\ell'})\quad\text{for }
\ell=m,m+1,m+p_1,\ 
\ell'=m',m'+1,\ldots,m'+p_2,
\]
where \(m\isdef\min\{k,2^{j}-p_1\}\), \(m'\isdef\min\{k',2^{j}-p_2\}
\), and the Lagrange polynomials 
\(L_0^{(1)}(x),L_1^{(1)}(x),\ldots,L_{p_1}^{(1)}(x)\)
with respect to the abscissae
\(x_\ell\isdef 2^{-j}(m+\ell)\) and 
\(L_0^{(2)}(y),L_1^{(2)}(y),\ldots,L_{p_2}^{(2)}(y)\)
with respect to
\(y_\ell\isdef 2^{-j}(m'+\ell)\). Now, we approximate
\begin{equation}\label{eq:polyApprox}
{\bs s}_i|_{\square_{k,k'}^{(j)}}
\approx\bigg(\sum_{\ell=m}^{m+p_1}\sum_{\ell'=m'}^{m'+p_2}
{\bs s}_i({\bs\xi}_{\ell,\ell'})L_{\ell}^{(1)}\otimes
L_{\ell}^{(2)}\bigg)\bigg|_{\square_{k,k'}^{(j)}}.
\end{equation}
Here, the polynomial degree \({\bs p}\) is always chosen
such that the overall consistency error is met.

We remark that the polynomial approximation \eqref{eq:polyApprox}
particularly allows for the rapid evaluation of geometric quantities, 
such as the surface measure and the normal vector by means of
the complete 
Horner scheme if the Newton basis is used for representing the
polynomials.

For the representation of shape variations, we adopt an
approach that is well established in the context of random
domain variations: we represent these variations by means
of the Karhunen-Lo\`eve expansion, see \cite{GS,L},
of a parametric deformation field with design parameters 
$\{y_k\}_k$, that is
\begin{align}\label{eq:boundarykl}
{\bs\chi}({\bs x},{\bs y})={\bs x}+
\sum_{k=1}^\infty\sqrt{\lambda}_k{\bs V}_{\!k}({\bs x})y_k,
\quad{\bs x}\in\Gamma_\refd.
\end{align}
Herein, \(\{\lambda_k,{\bs V}_{\!k}\}_k\) are the eigenpairs of the
Hilbert-Schmidt operator associated to some matrix valued covariance
function
\begin{equation}\label{eq:covariance}
\operatorname{Cov}[{\bs\chi}]\colon\Gamma_\refd\times\Gamma_\refd
\to\mathbb{R}^{d\times d}.
\end{equation}
To represent the covariance structure of the
interpolation points 
\[
{\bs\Xi}\isdef\big\{{\bs s}_i({\bs\xi}_{\ell,\ell'}):
i=1,2,\ldots,M,\ell,\ell'=0,1,\ldots 2^j\big\},
\]
it is sufficient to consider the matrix 
\[
{\bs C}\isdef
\big[\operatorname{Cov}[{\bs\chi}]({\bs\xi},{\bs\xi}')\big],\quad
{\bs\xi},{\bs\xi}'\in{\bs\Xi}.
\]

Since \({\bs C}\in\mathbb{R}^{3M(2^j+1)^2\times 3M(2^j+1)^2}\), the
solution of the corresponding eigenvalue problem can easily become
prohibitive. Therefore, we shall rely on an approach based on the 
pivoted Cholesky decomposition, compare \cite{HPS12,HPS14}.
Assuming that \({\bs C}\approx{\bs L}{\bs L}^\intercal\) with 
\(\operatorname{rank}{\bs L}=p\ll 3M(2^j+1)^2\), we can replace the 
eigenvalue problem \({\bs L}{\bs L}^\intercal{\bs v}=\lambda{\bs v}\)
by the much smaller one \({\bs L}^\intercal{\bs L}\tilde{\bs v}=\lambda\tilde{\bs v}\).
Having computed an eigenpair \((\lambda_,\tilde{\bs v}_i)\) of the latter,
then \((\lambda_k,{\bs v}_k)\) with ${\bs v}_k\isdef {\bs L}\tilde{\bs v}_k$ 
is an eigenpair of the original eigenvalue problem. Especially, 
there holds \({\bs v}^\intercal_k{\bs v}_k=\lambda_k\). 

As we consider a shape optimization problem, we will drop the 
weighting induced by the eigenvalues \(\{\lambda_k\}_k\) and consider 
only the eigenvectors $\{{\bs v}_k\}_k$ as principal directions for the
domain deformation. Then, by applying 
the interpolation procedure described earlier, we arrive at the parametric 
surface representation
\begin{equation}\label{eq:boundarRep}
\Gamma({\bs y})\isdef{\bs\chi}(\Gamma_\refd,{\bs y})=\Gamma_\refd+
\sum_{k=1}^p{\bs V}_{\!k}({\bs x})y_k,
\quad{\bs y}\in\mathbb{R}^p,
\end{equation}
where the displacement fields \({\bs V}_{\!k}\) are obtained by
polynomial interpolation of the discrete points encoded by
\({\bs v}_k\).

\subsection{Boundary element method} 
We discretize the Neumann-to-Dirichlet map by means of
an isogeometric Galerkin discretization. To this end, let 
$\{\varphi_{j,k}\}$ denote a suitable B-spline basis of order
$d$ on the mesh of level $j$ and define the system matrices
\[
  {\bs S}_j = \bigg[\int_\Gamma (\mathcal{V}\varphi_{j,k'})(\bx)\varphi_{j,k}(\bx)\dd\sigma\bigg]_{k,k'},\quad
  {\bs K}_j = \bigg[\int_\Gamma (\mathcal{K}\varphi_{j,k'})(\bx)\varphi_{j,k}(\bx)\dd\sigma\bigg]_{k,k'}.
\]
They correspond to the single layer operator \eqref{eq:SL}
and double layer operator \eqref{eq:DL}, respectively. 
Moreover, we require the mass matrix
and the right-hand sides
\[
  {\bs M}_j = \bigg[\int_\Gamma \varphi_{k}(\bx)\varphi_{j,k'}(\bx)\dd\sigma\bigg]_{k,k'},
  \quad {\bs b}_{i,j} = \bigg[\int_\Gamma\langle\ei(\bx),\n(\bx)\rangle\varphi_{j,k}(\bx)\dd\sigma\bigg]_k,
\]
where $i=1,2,3$. Then, the discrete version of the 
Neumann-to-Dirichlet map \eqref{eq:N2D} for the computation
of the cell 
functions $w_i\approx \sum_k [{\bs w}_{i,j}]_k\varphi_{j,k}$ is 
given by the linear system of equations
\begin{equation}\label{eq:discreteN2D}
  \bigg(\frac{1}{2}{\bs M}_j+{\bs K}_j\bigg){\bs w}_{i,j} = {\bs S}_j{\bs M}_j^{-1}{\bs b}_{i,j}.
\end{equation}
In practice, we use globally continuous B-spline functions
for the Dirichlet data and patchwise continuous B-spline functions
for the Neumann data, as the Neumann data cannot be expected
to be continuous across patch boundaries if the geometry under
consideration exhibits corners and
edges. 

The implementation of the discretized Neumann-to-Dirichlet 
map \eqref{eq:discreteN2D} is done with the help of the 
\verb|C++|-library \verb+bembel+, where the order of the 
B-splines can be chosen arbitrarily. We refer the reader 
to \cite{DHK+20,bembel} for the details concerning
\verb+bembel+.

Having the Neumann and Dirichlet data of the cell functions at
hand, we are able to compute the shape functional \eqref{eq:functional}
and the shape gradient \eqref{eq:gradient} in accordance with Lemmata 
\ref{lem:coefficient} and \ref{lem:dcoefficient}.

\section{Numerical results}\label{sct:numerix}
In this section, we present extensive
numerical results to demonstrate the feasibility 
of our approach. The following setup is chosen: The 
covariance kernel \eqref{eq:covariance} for generating the displacement 
fields ${\bs V}_{\!k}$ in \eqref{eq:boundarykl} is chosen diagonally 
with diagonal entries given by the Mat\`ern kernel with smoothness 
index $\nu = 9/2$, that is
\begin{equation}\label{eq:matern}
  k_{9/2}(r)=\bigg(1+\frac{3r}{\ell}+\frac{27r^2}{7\ell^2}
		+\frac{18r^3}{7\ell^3}+\frac{27r^4}{35\ell^3}\bigg)\exp\bigg(-\frac{3r}{\ell}\bigg),
\end{equation}
where the radius is given by $r = \|\bx-\by\|_2$ and $\ell$
denotes the correlation length. Notice 
that the Mat\`ern kernels $k_\nu$ are the reproducing
kernels of the Sobolev spaces ${\mathrm{H}}^{\nu+3/2}(\mathbb{R}^3)$.
The correlation length is always set to $\ell=1$, unless otherwise 
stated.

In the subsequent examples, we consider a sphere, a cube
 and a drilled cube as initial shapes.
The shape discretization is based on $p = 16$ displacement 
vectors except for the drilled cube, see Section~\ref{sct:drilled}.
The iterative optimization of the initial shape $\Omega$ 
is performed by using the gradient descent method with
quadratic line search, where convergence was always 
obtained within 25 steps. The boundary element 
method for solving the state equation employs quadratic 
B-spline functions on refinement level $j=4$, which amounts 
to about 1700 boundary elements on the sphere and also 
on the cube. In case of the drilled cube, we use refinement 
level $j=3$, which yields 4000 boundary elements.

As the shapes are three-dimensional, we display them with respect
to different directions. The first image is always oriented such
that we are 
looking into $x$-direction, the second one such that we are 
looking into $y$-direction and the third one such that we are 
looking into $z$-direction, see Figure \ref{fig_coordinates}
for the orientation axes.

\begin{figure}[h]
\centering
\begin{tikzpicture}
\draw[->] (0, 0, 0) -- (-2, 0, 0) node[left] {$y$};
\draw[->] (0, 0, 0) -- (0, 2, 0) node[above] {$z$};
\draw[->] (0, 0, 0) -- (0, 0, -2) node[above right] {$x$};
\draw[->] (2, 0, 0) -- (4, 0, 0) node[right] {$x$};
\draw[->] (2, 0, 0) -- (2, 2, 0) node[above] {$z$};
\draw[->] (2, 0, 0) -- (2, 0, -2) node[above right] {$y$};

\draw[->] (7.5, 0, 0) -- (5.5, 0, 0) node[left] {$x$};
\draw[->] (7.5, 0, 0) -- (7.5, 2, 0) node[above] {$y$};
\draw[->] (7.5, 0, 0) -- (7.5, 0, -2) node[above right] {$z$};
\end{tikzpicture}
\caption{The coordinate axes for the plots of the resulting geometries.}
\label{fig_coordinates}
\end{figure}
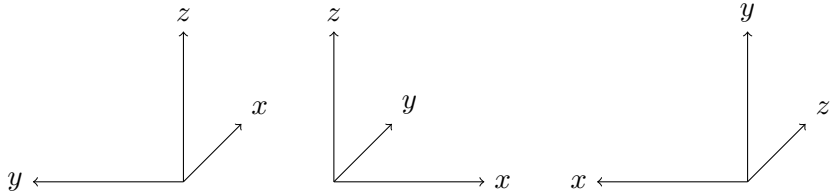

The shape functional assumes very small values in 
each of the numerical examples, which indicates that the solution is not unique
and depends on the particular choice of the displacement vectors.
Indeed, it has already been observed in \cite{DH} that 
the solution of the shape optimization problem under 
consideration depends on the choice of the initial shape. 
This is also observed in our numerical experiments, as 
we present different optimal shapes for the same 
desired effective tensors. 

\subsection{Sphere}
We start with the canonical example of the sphere. To this end, 
let us denote an open ball with radius $r$, centred around the 
point $\bx\in\mathbb{R}^3$, by $B_r(\bx)=\{\by\in\mathbb{R}^3:
\|\by-\bx\|_2<r\}$. Then, in the following examples, we use the 
ball $B_{0.3}({\bs 0})$ as the initial shape and the optimization 
is stopped if $J(\Omega) < 10^{-5}$. For the sphere as an 
initial guess, this criterion was satisfied in the subsequent 
experiments after 3, 6, 17, and 25 iterations, respectively.

In our first test, we used \[
\boldsymbol{B}_1 = 0.9\boldsymbol{I}\quad
\text{and}\quad\boldsymbol{B}_2 = 0.6\boldsymbol{I}\] as desired effective 
tensor $\boldsymbol{B}$ in \eqref{eq:functional}. This choice yields 
just a scaling of the radius of the ball, compare Figure \ref{result_2I} 
for $\boldsymbol{B}_1$ and 
Figure \ref{result_halfI} for $\boldsymbol{B}_2$. 

\begin{figure}
\begin{subfigure}{0.32\textwidth}
\includegraphics[scale=0.19]{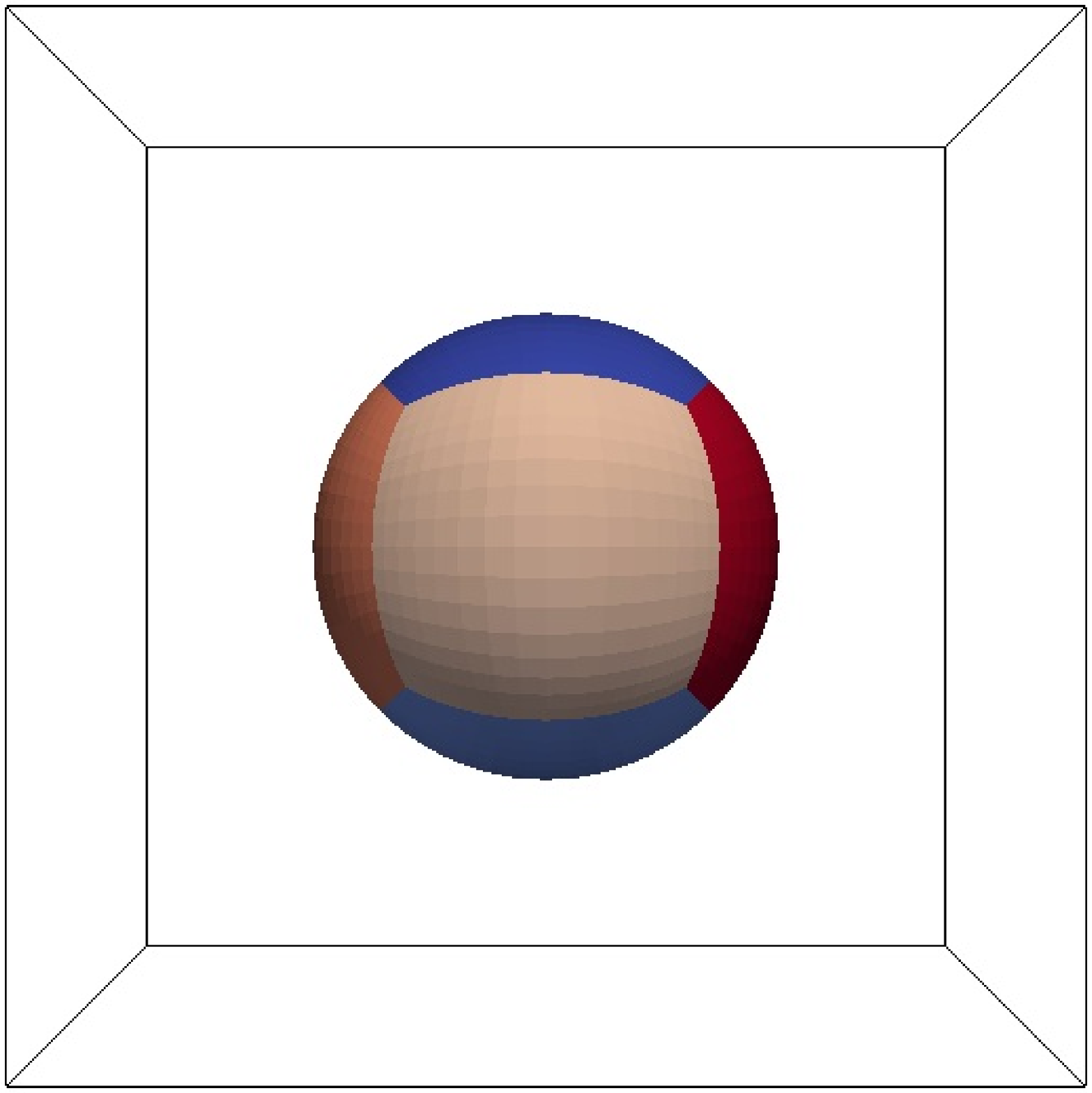}
\end{subfigure}
\begin{subfigure}{0.32\textwidth}
\includegraphics[scale=0.19]{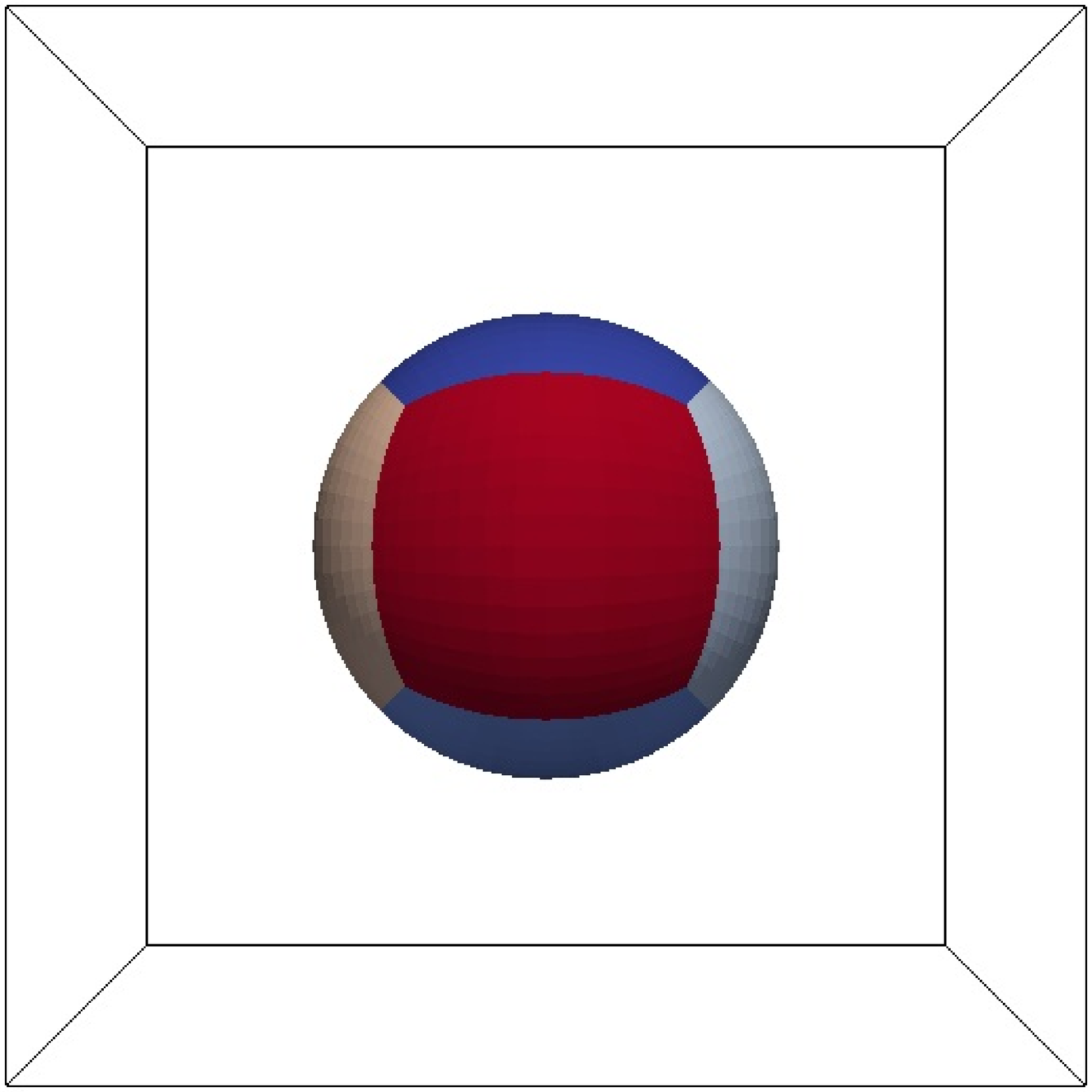}
\end{subfigure}
\begin{subfigure}{0.32\textwidth}
\includegraphics[scale=0.19]{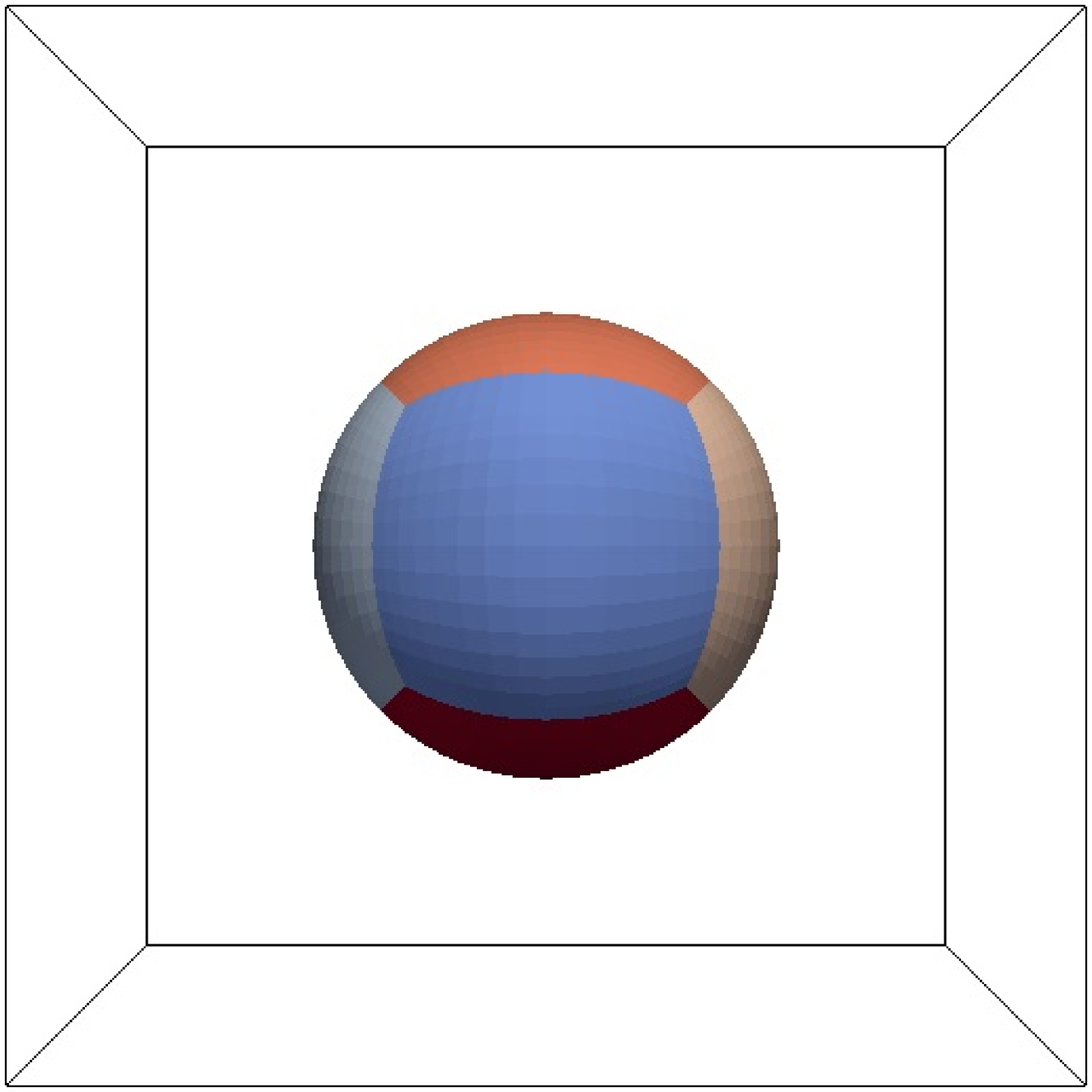}
\end{subfigure}
\caption{Desired tensor $\boldsymbol{B}_1$: Starting 
from the initial guess $B_{0.3}({\bs 0})$, one obtains a ball
of approximate radius $0.25$.}
\label{result_2I}
\end{figure}

\begin{figure}[hbt]
\begin{subfigure}{0.32\textwidth}
\includegraphics[scale=0.19]{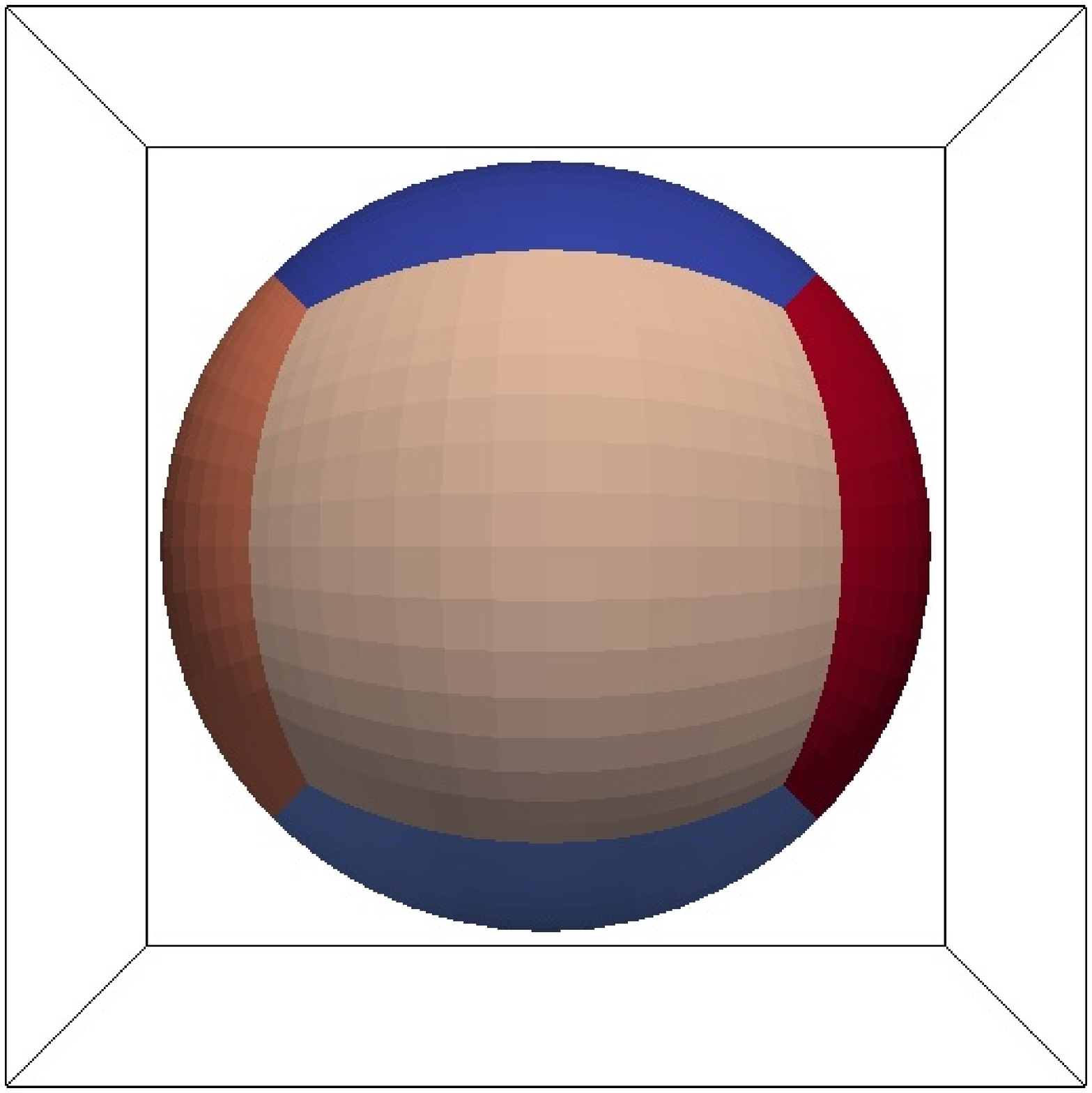}
\end{subfigure}
\begin{subfigure}{0.32\textwidth}
\includegraphics[scale=0.19]{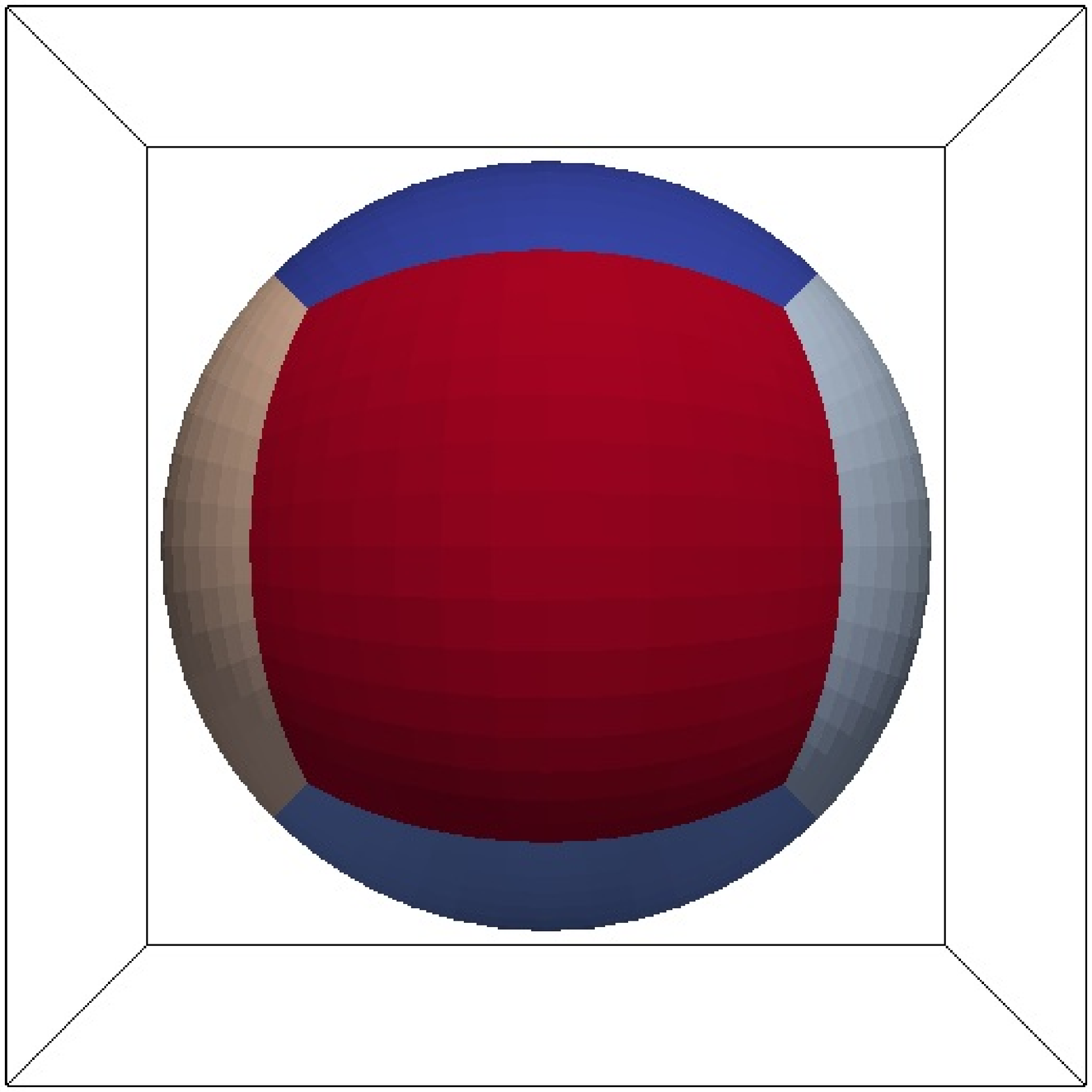}
\end{subfigure}
\begin{subfigure}{0.32\textwidth}
\includegraphics[scale=0.19]{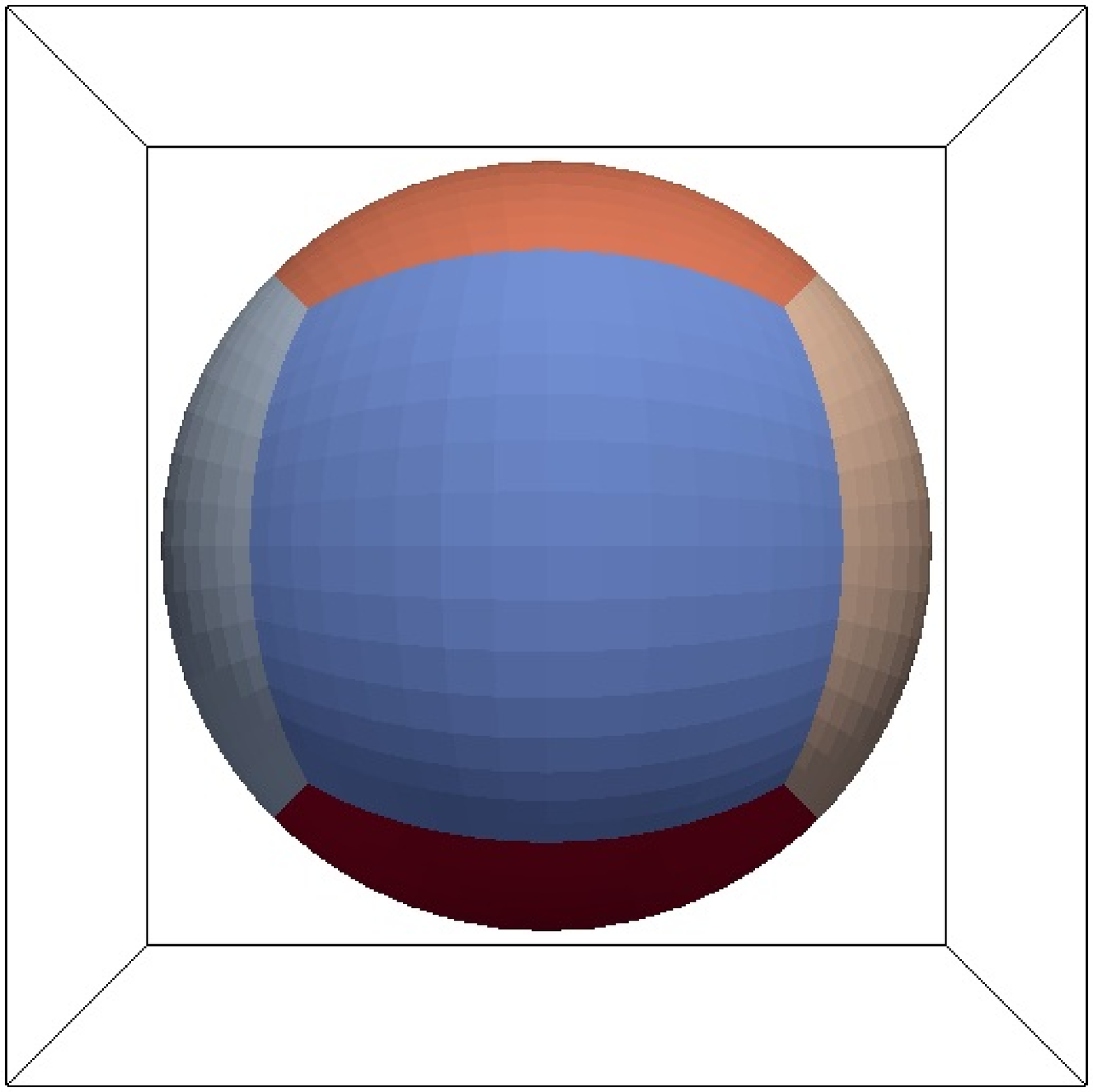}
\end{subfigure}
\caption{Desired tensor $\boldsymbol{B}_2$: 
Starting with the initial guess $B_{0.3}({\bs 0})$, one obtains 
a ball of approximate radius $0.42$.}
\label{result_halfI}
\end{figure}

To obtain an ellipsoid, we prescribe an anisotropic 
effective tensor $\boldsymbol{B}$ in \eqref{eq:functional}. 
Let us therefore define the matrix
\[
\boldsymbol{B}_3 = 
\begin{bmatrix}0.9 & & \\ & 0.88 & \\ & & 0.86\end{bmatrix}.
\] Using $\boldsymbol{B}_3$ 
as the desired effective tensor, the shape of the cavity changes 
significantly, as can be seen in Figure \ref{result_anisotropic}.

In order to account for shapes which are not oriented along the 
coordinate axes, we define the orthogonal transformation
\begin{equation}\label{T_defin}
\boldsymbol{T} = \begin{bmatrix}
\frac{1}{\sqrt{3}} & 0 & \frac{2}{\sqrt{6}}\\
\frac{1}{\sqrt{3}} & - \frac{1}{\sqrt{2}} & -\frac{1}{\sqrt{6}}\\
\frac{1}{\sqrt{3}} & \frac{1}{\sqrt{2}} & -\frac{1}{\sqrt{6}}
\end{bmatrix}
\end{equation}
and choose the desired effective tensor \[
\boldsymbol{B}_4 = 
\boldsymbol{TB}_3\boldsymbol{T}^\intercal,\] i.e.\ a rotated 
version of the matrix $\boldsymbol{B}_3$. As can be seen 
in Figure \ref{result_rotated}, the perforation orients differently. 
Indeed, it is now aligned to the rotated coordinate system 
induced by the transformation $\boldsymbol{T}$.

\begin{figure}[hbt]
\begin{subfigure}{0.32\textwidth}
\includegraphics[scale=0.19]{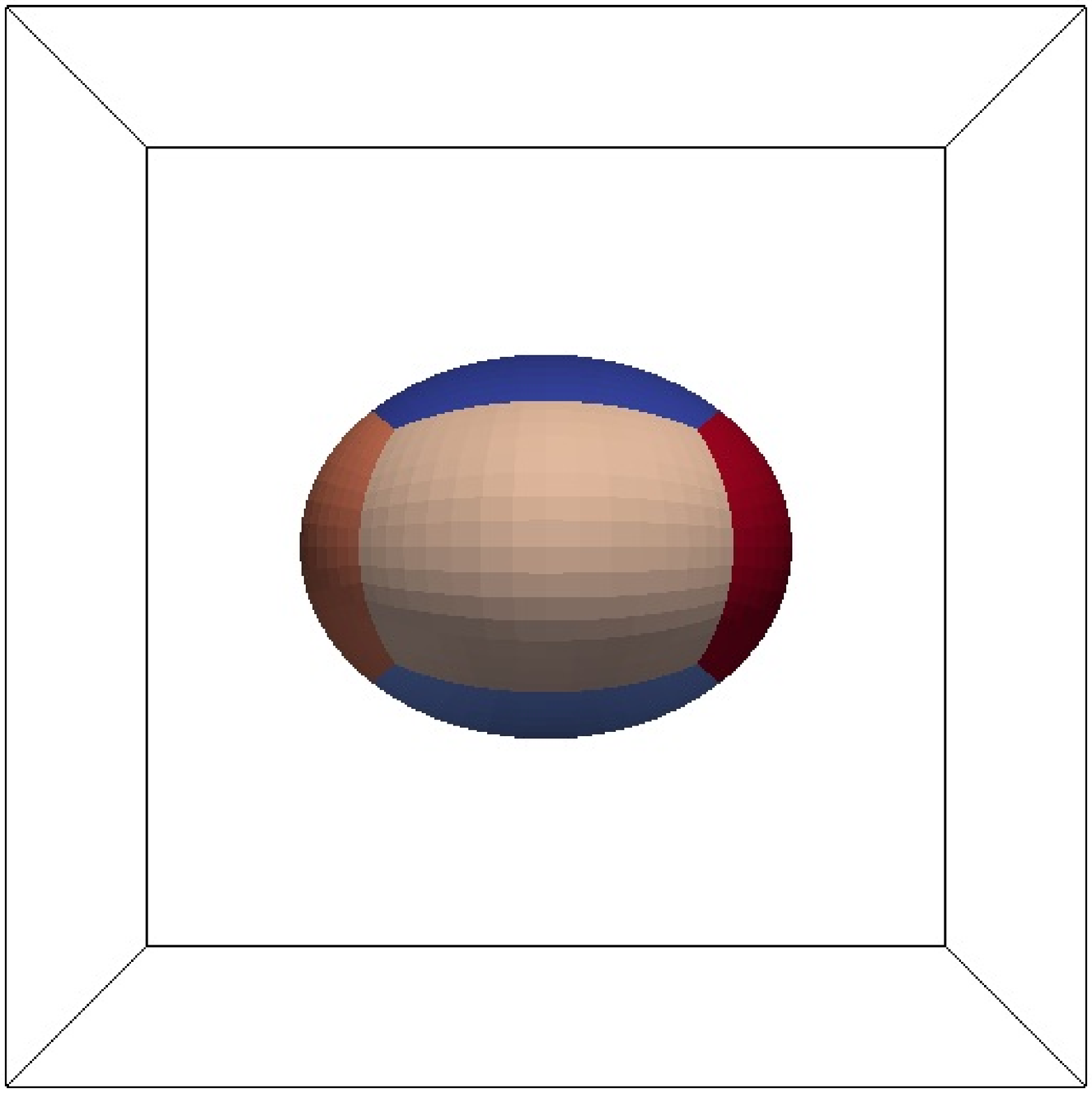}
\end{subfigure}
\begin{subfigure}{0.32\textwidth}
\includegraphics[scale=0.19]{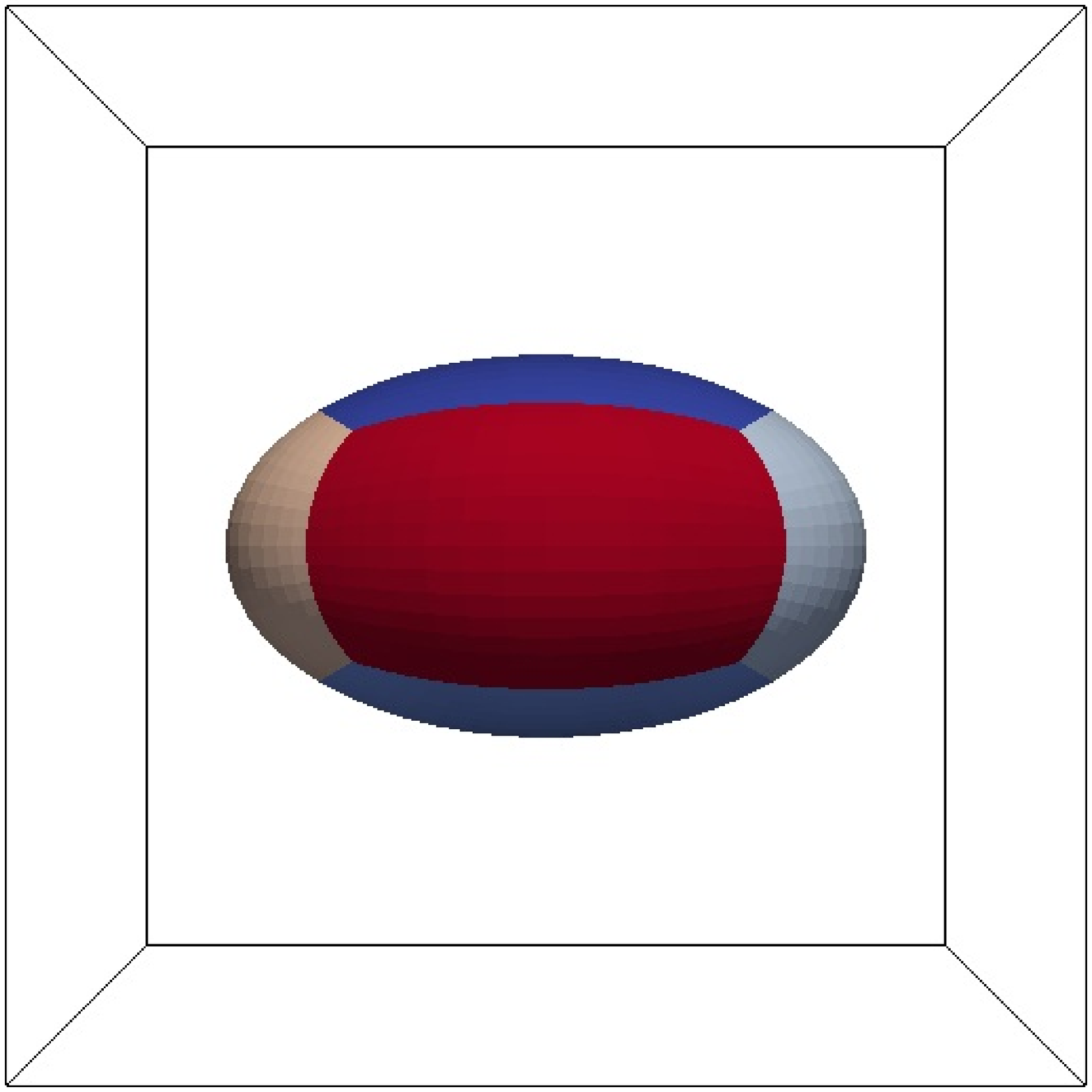}
\end{subfigure}
\begin{subfigure}{0.32\textwidth}
\includegraphics[scale=0.19]{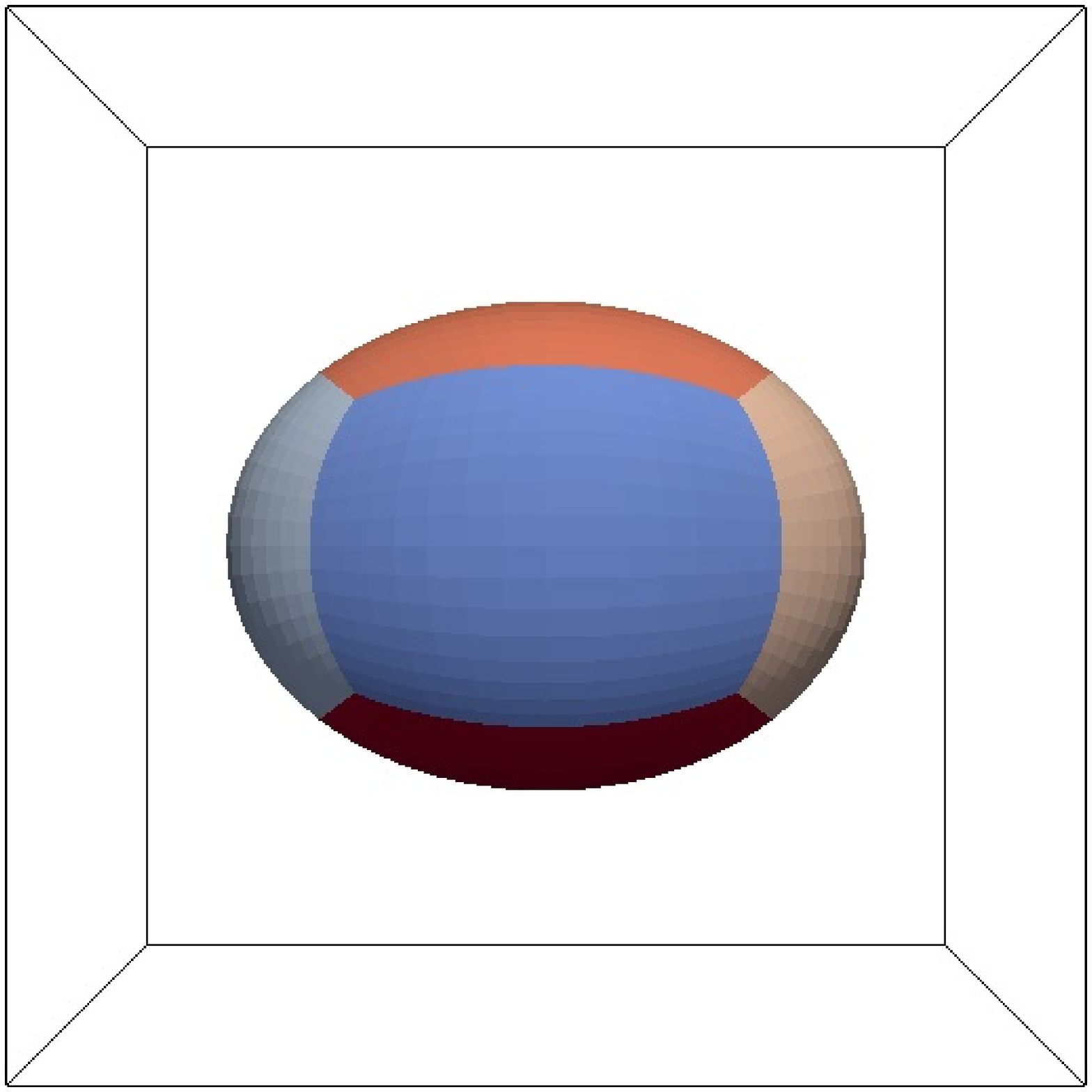}
\end{subfigure}
\caption{Resulting shape in case of the initial guess is 
$B_{0.3}({\bs 0})$ and the desired tensor $\boldsymbol{B}_3$.}
\label{result_anisotropic}
\end{figure}
\begin{figure}[hbt]
\begin{subfigure}{0.32\textwidth}
\includegraphics[scale=0.19]{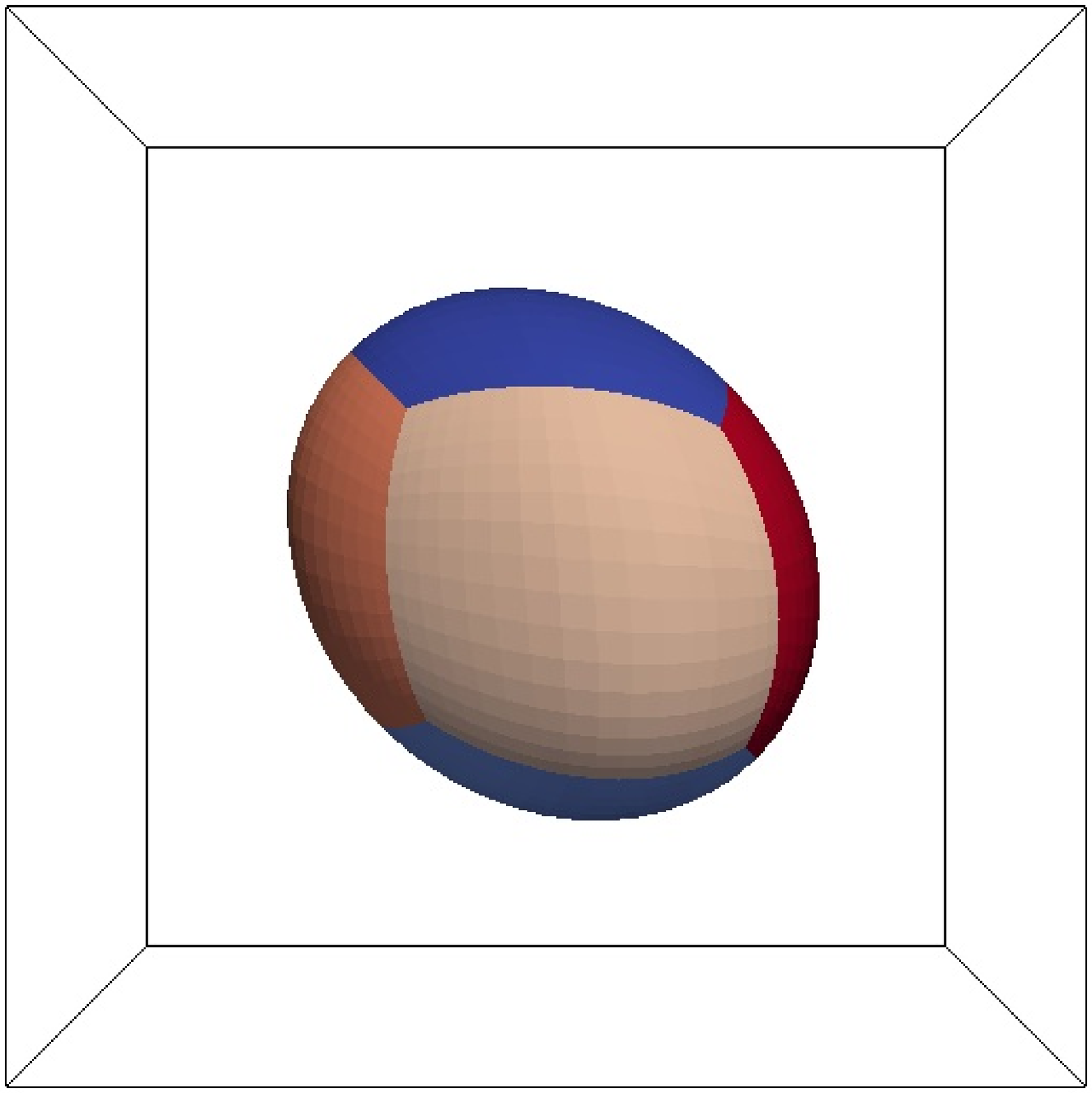}
\end{subfigure}
\begin{subfigure}{0.32\textwidth}
\includegraphics[scale=0.19]{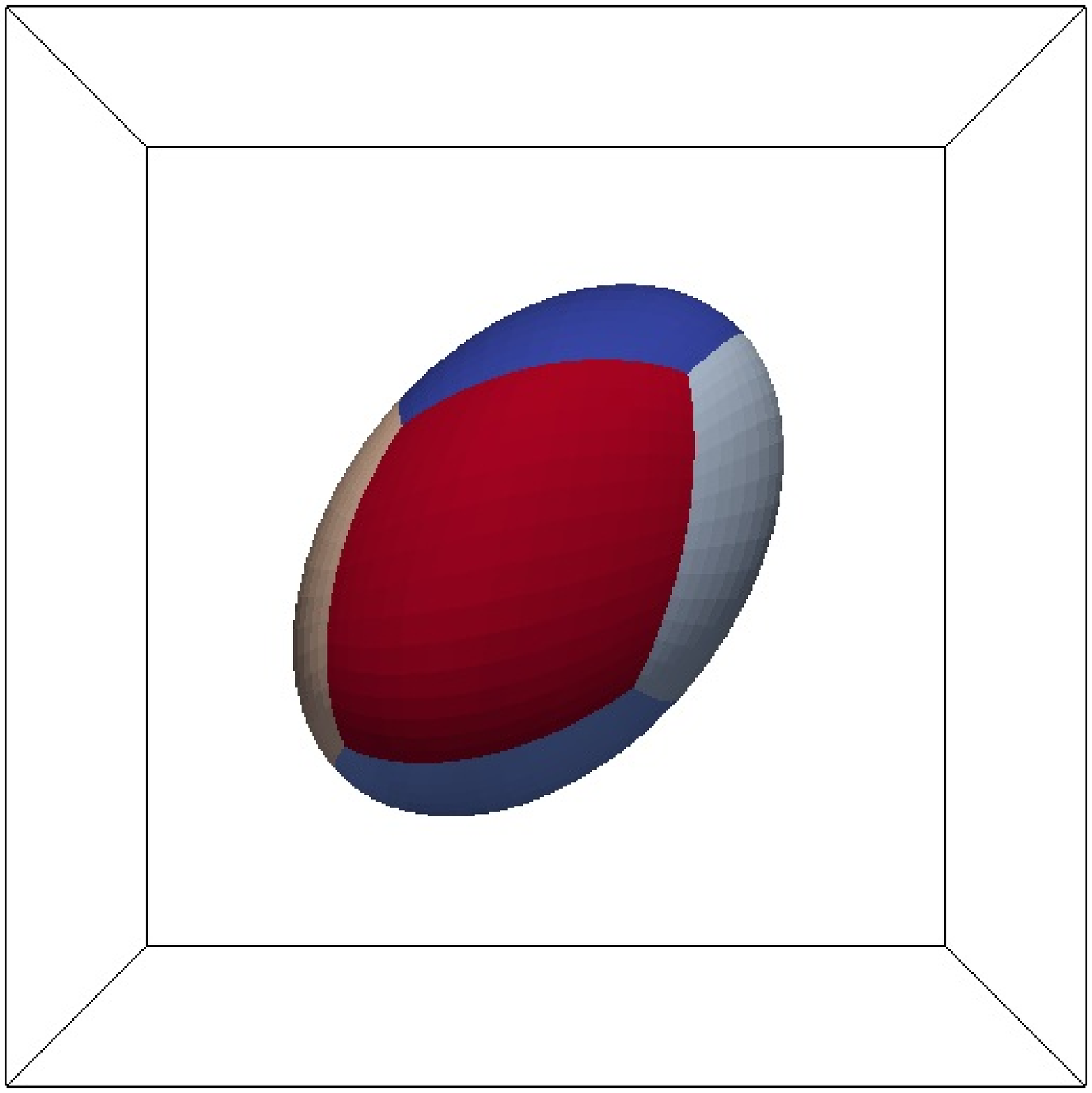}
\end{subfigure}
\begin{subfigure}{0.32\textwidth}
\includegraphics[scale=0.19]{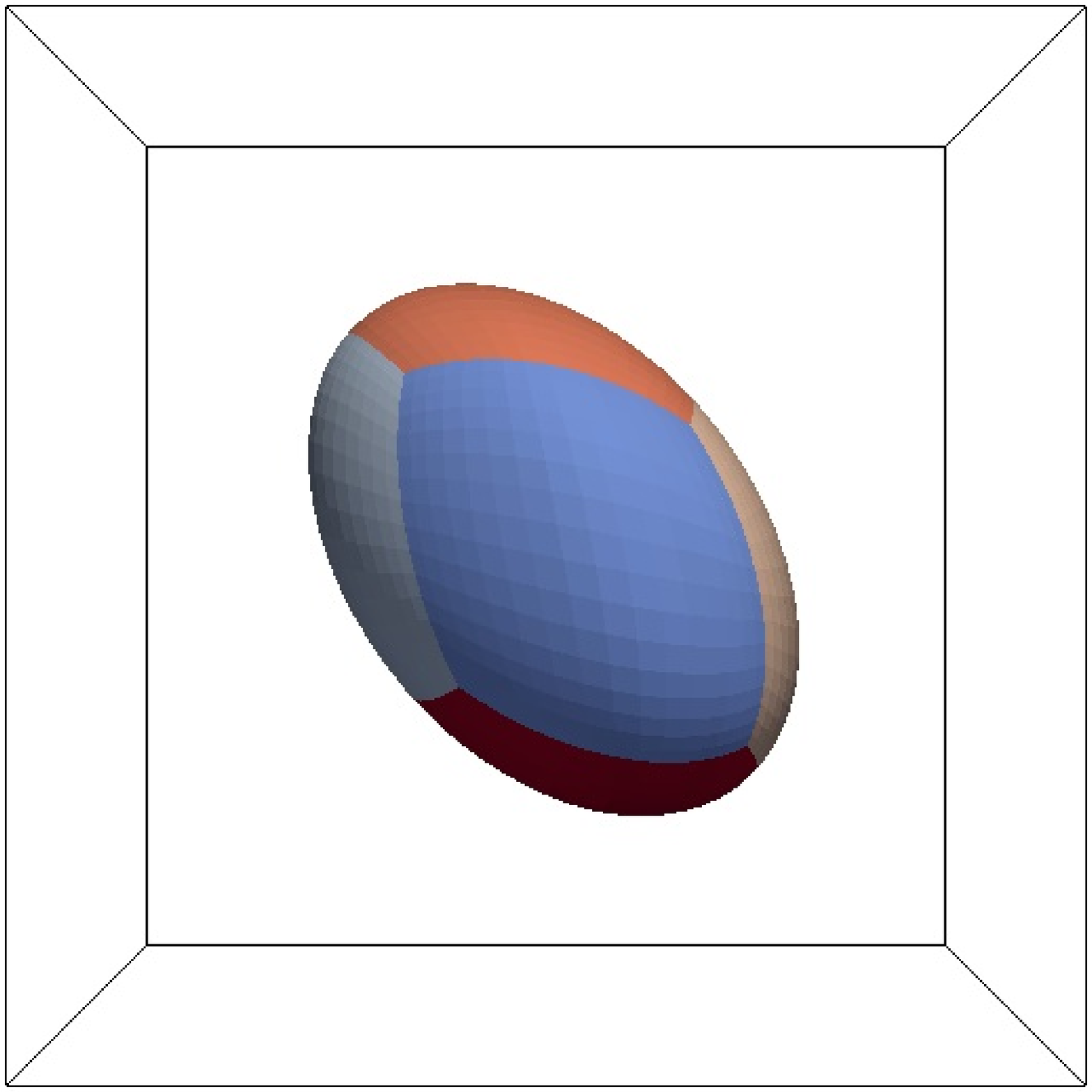}
\end{subfigure}
\caption{Resulting shape in case of the initial guess is 
$B_{0.3}({\bs 0})$ and the desired tensor $\boldsymbol{B}_4$.}
\label{result_rotated}
\end{figure}
%
\subsection{Cube}
In the next examples, we consider the cube 
$\mathcal{C} = [-0.15, 0.15]^3$ and its rotated version
$\boldsymbol{T}\mathcal{C}$, respectively, as initial guesses. We want 
to examine what happens if we optimize its shape with respect to a 
given desired effective tensor. In all examples, convergence 
to the final value was achieved between 13 and 16 iterations.

If the effective tensor is diagonal relative to the axes of the 
initial cube, then we observe that $\Omega$ takes the shape 
of a cuboid, aligned with these axes. If not, then $\Omega$ takes 
approximately the shape of a parallelepiped. This behaviour can 
clearly be observed in Figures~\ref{fig:B_3|C}--\ref{fig:B_4|TC}.
Therein, the first situation (initial guess and desired effective tensor 
aligned) appears in Figures~\ref{fig:B_3|C} and \ref{fig:B_4|TC}, 
while the second situation (initial guess and desired effective 
tensor not aligned) appears in Figures~\ref{fig:B_4|C} and 
\ref{fig:B_3|TC}.

\begin{figure}[hbt]
\begin{subfigure}{0.32\textwidth}
\includegraphics[scale=0.19]{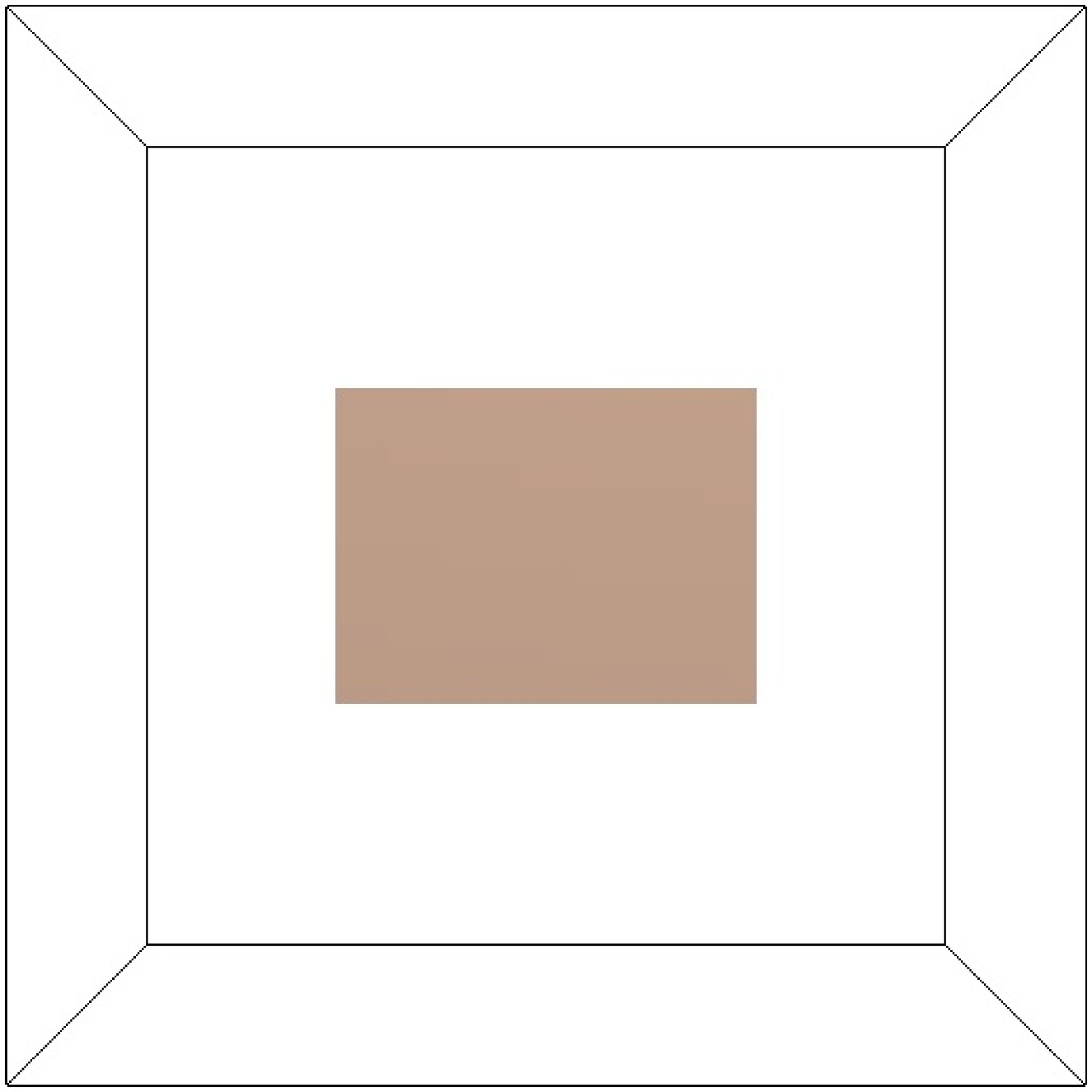}
\end{subfigure}
\begin{subfigure}{0.32\textwidth}
\includegraphics[scale=0.19]{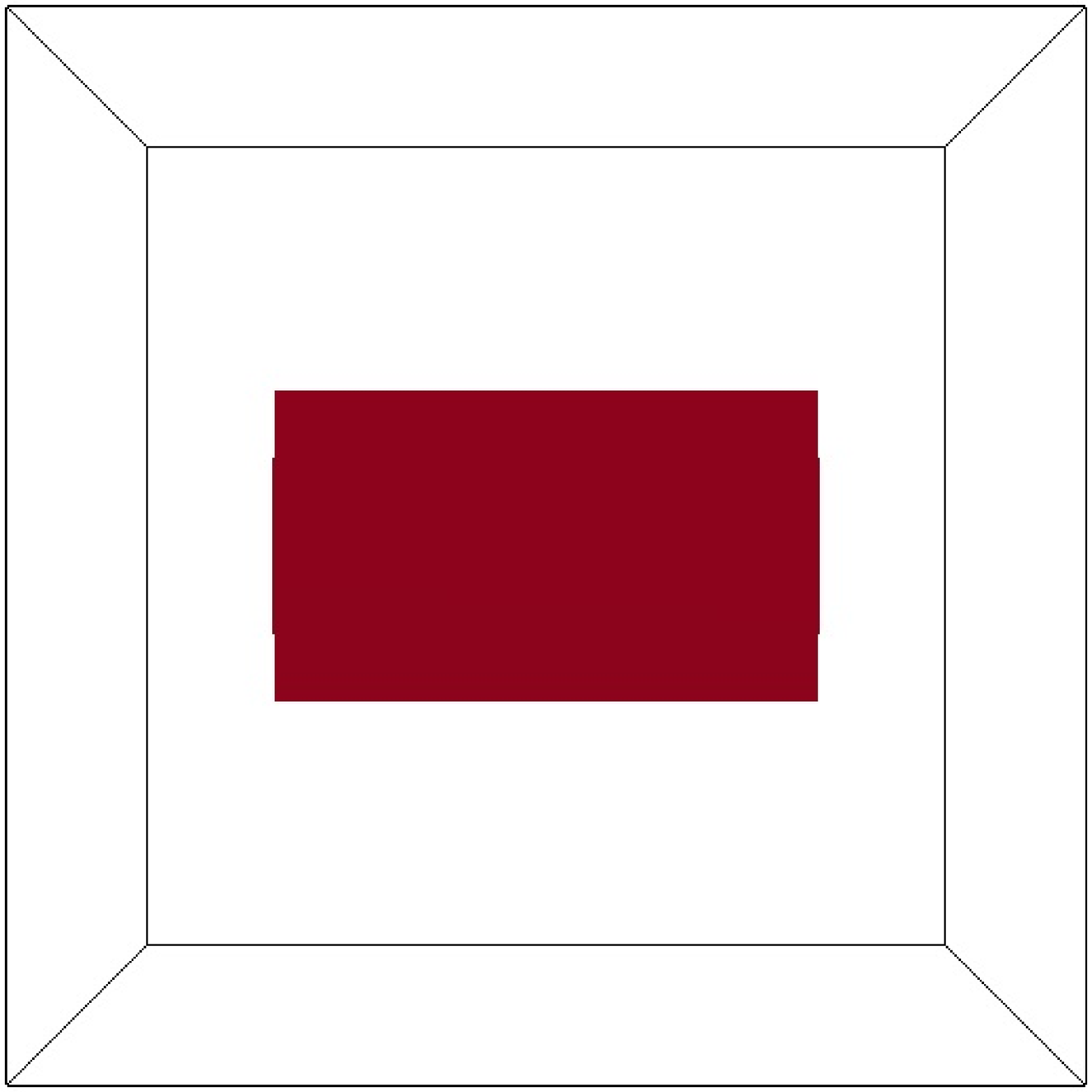}
\end{subfigure}
\begin{subfigure}{0.32\textwidth}
\includegraphics[scale=0.19]{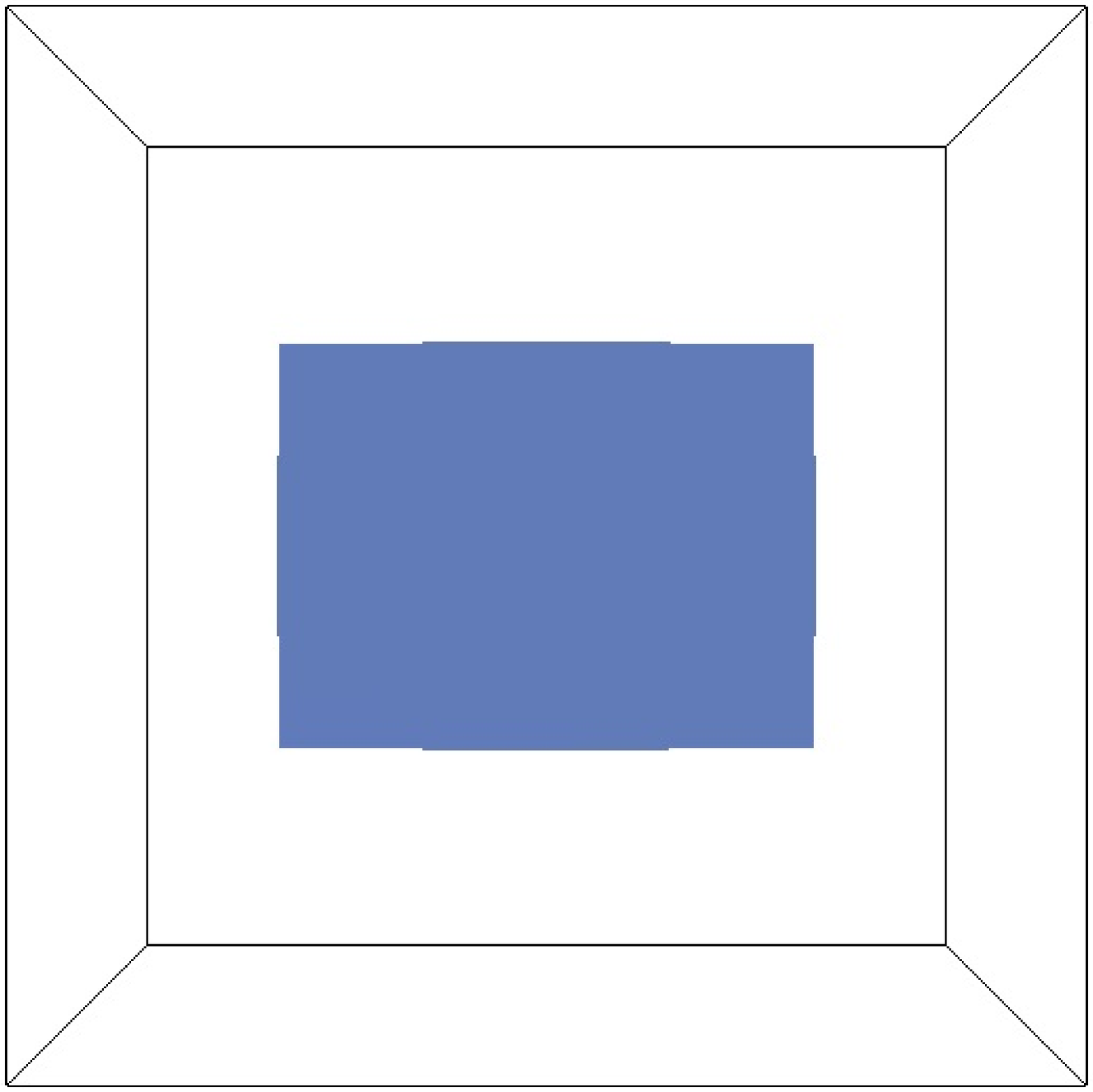}
\end{subfigure}
\caption{Resulting shape in case of the initial guess is 
$\mathcal{C}$ and the desired tensor $\boldsymbol{B}_3$.}
\label{fig:B_3|C}
\end{figure}

\begin{figure}[hbt]
\begin{subfigure}{0.32\textwidth}
\includegraphics[scale=0.19]{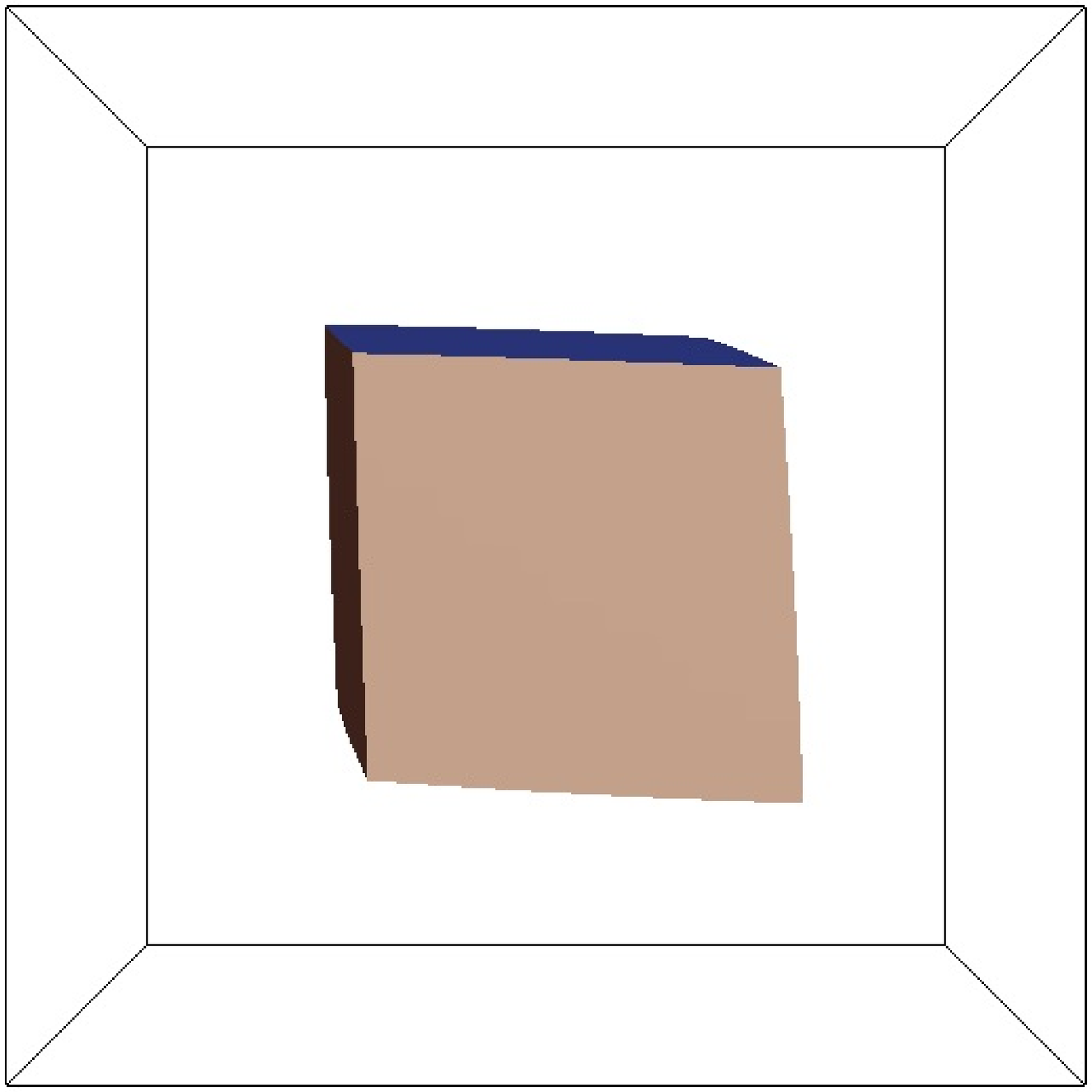}
\end{subfigure}
\begin{subfigure}{0.32\textwidth}
\includegraphics[scale=0.19]{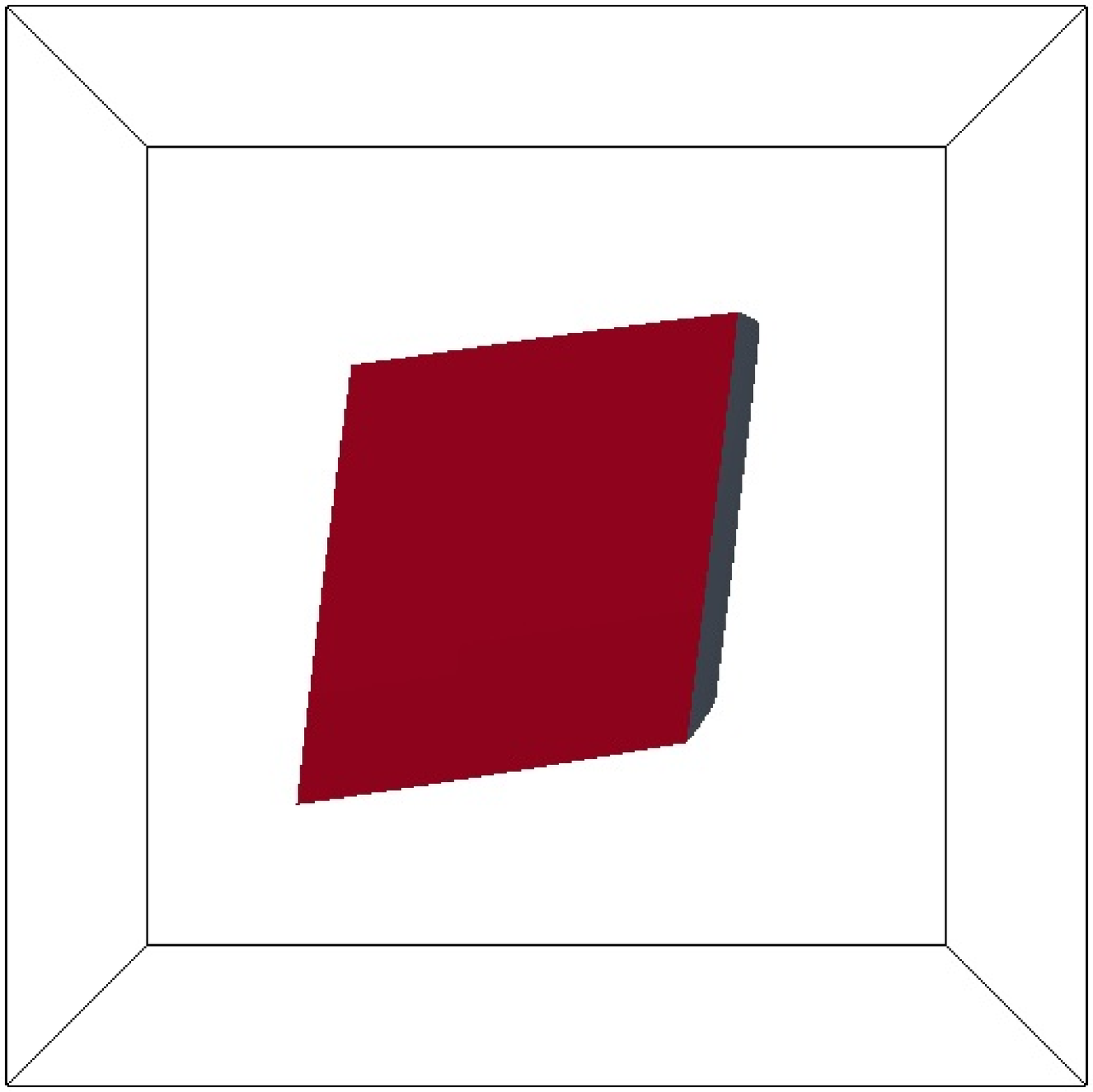}
\end{subfigure}
\begin{subfigure}{0.32\textwidth}
\includegraphics[scale=0.19]{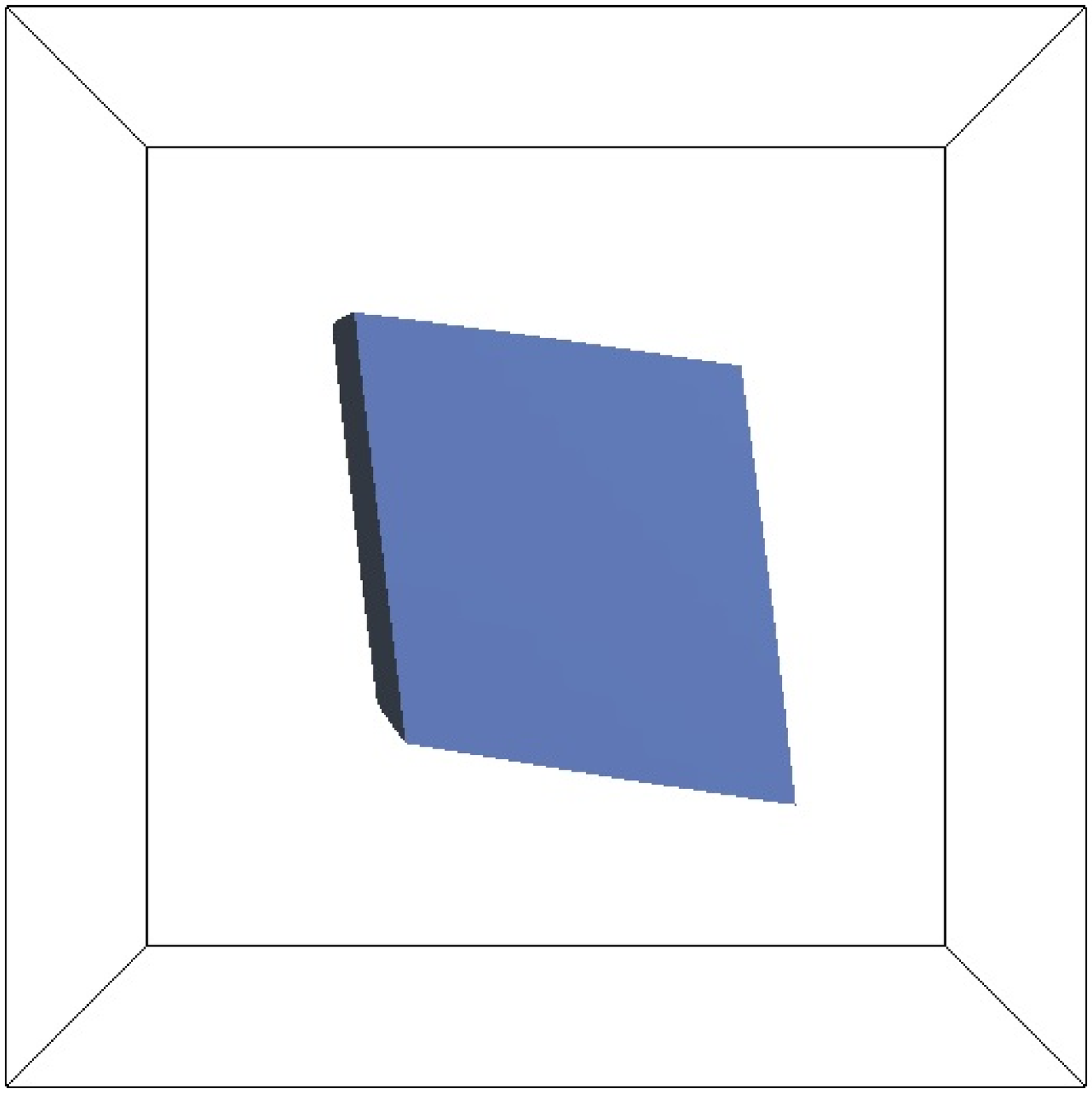}
\end{subfigure}
\caption{Resulting shape in case of the initial guess is 
$\mathcal{C}$ and the desired tensor $\boldsymbol{B}_4$.}
\label{fig:B_4|C}
\end{figure}

\begin{figure}[hbt]
\begin{subfigure}{0.32\textwidth}
\includegraphics[scale=0.19]{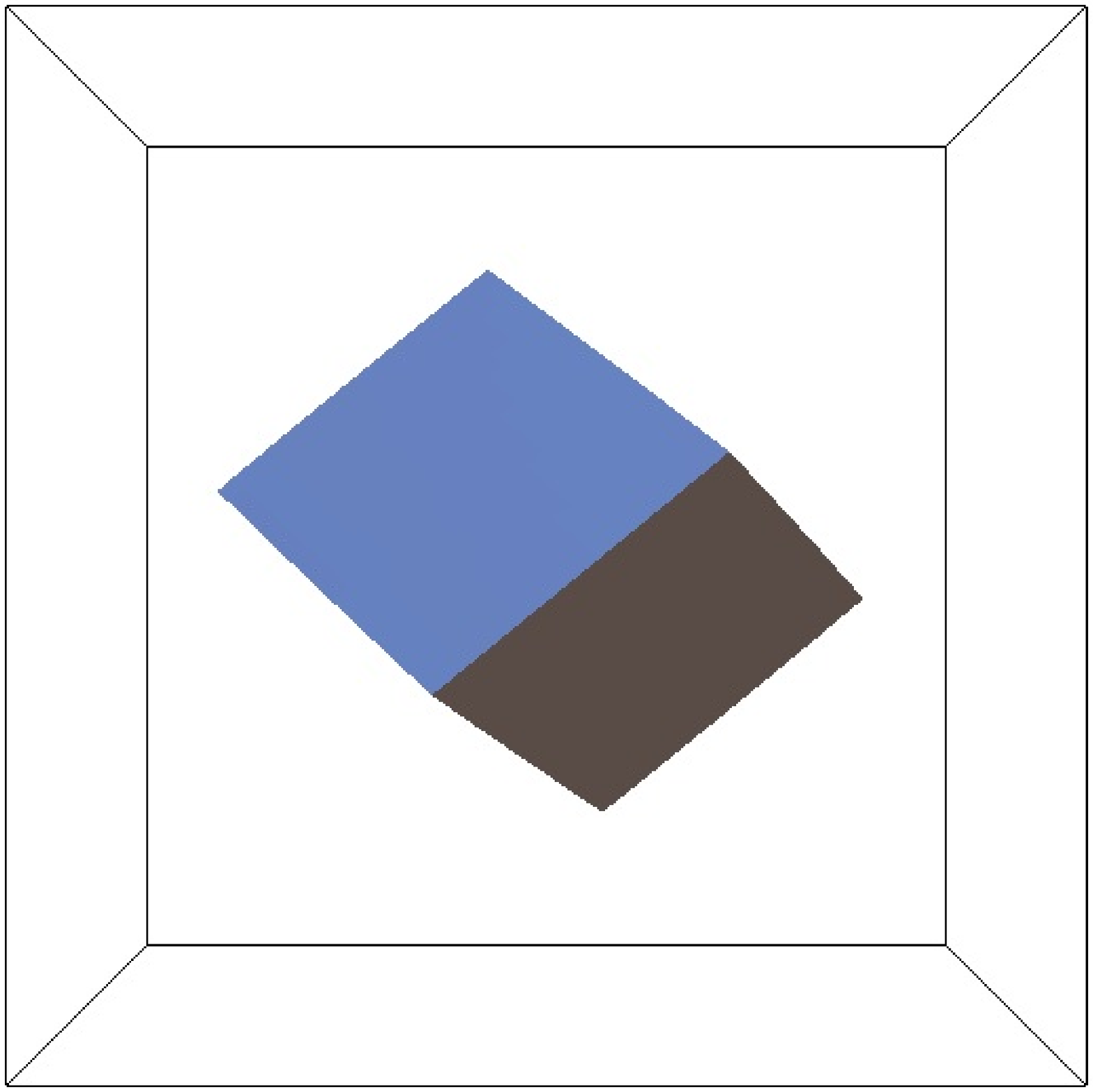}
\end{subfigure}
\begin{subfigure}{0.32\textwidth}
\includegraphics[scale=0.19]{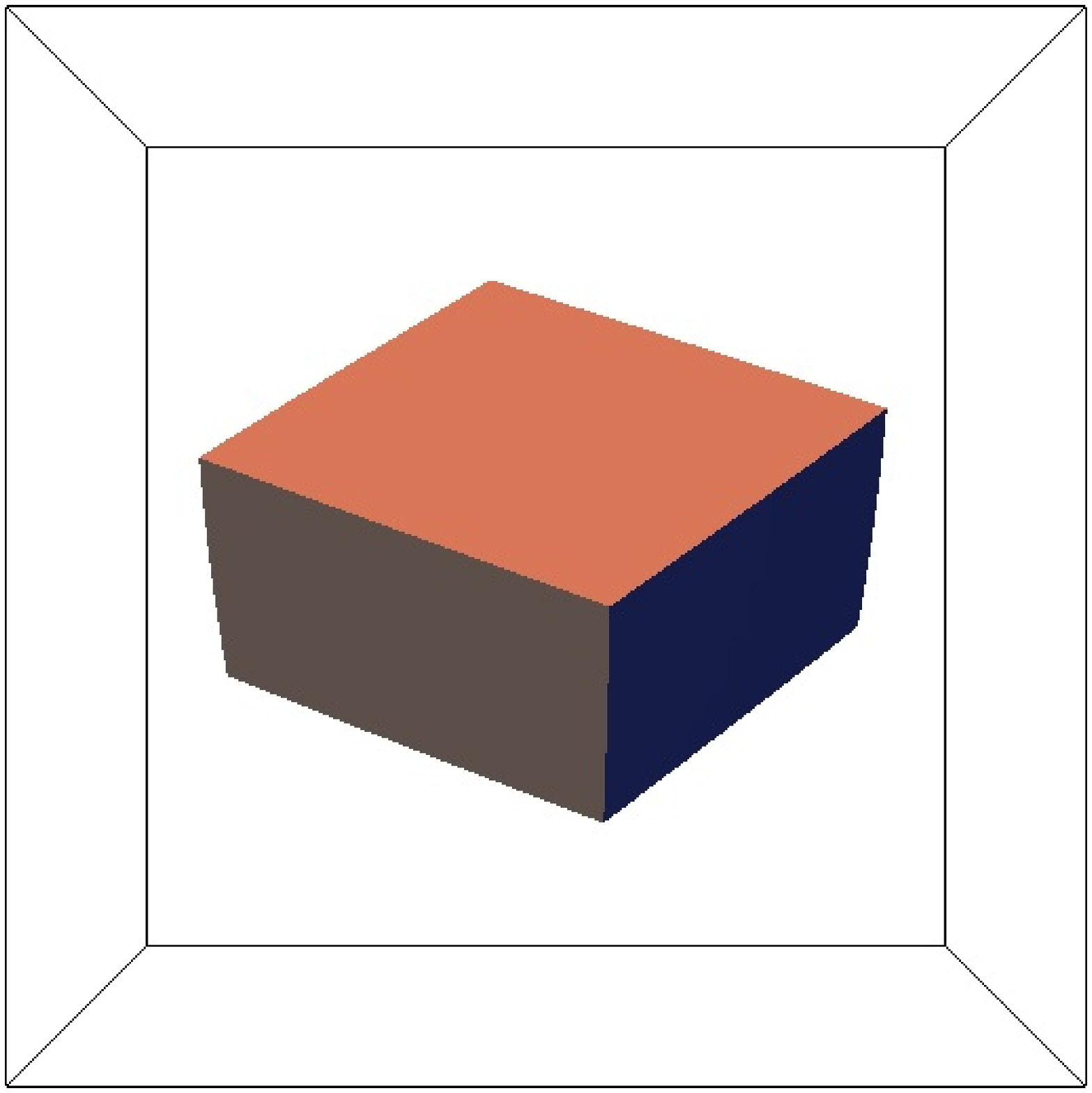}
\end{subfigure}
\begin{subfigure}{0.32\textwidth}
\includegraphics[scale=0.19]{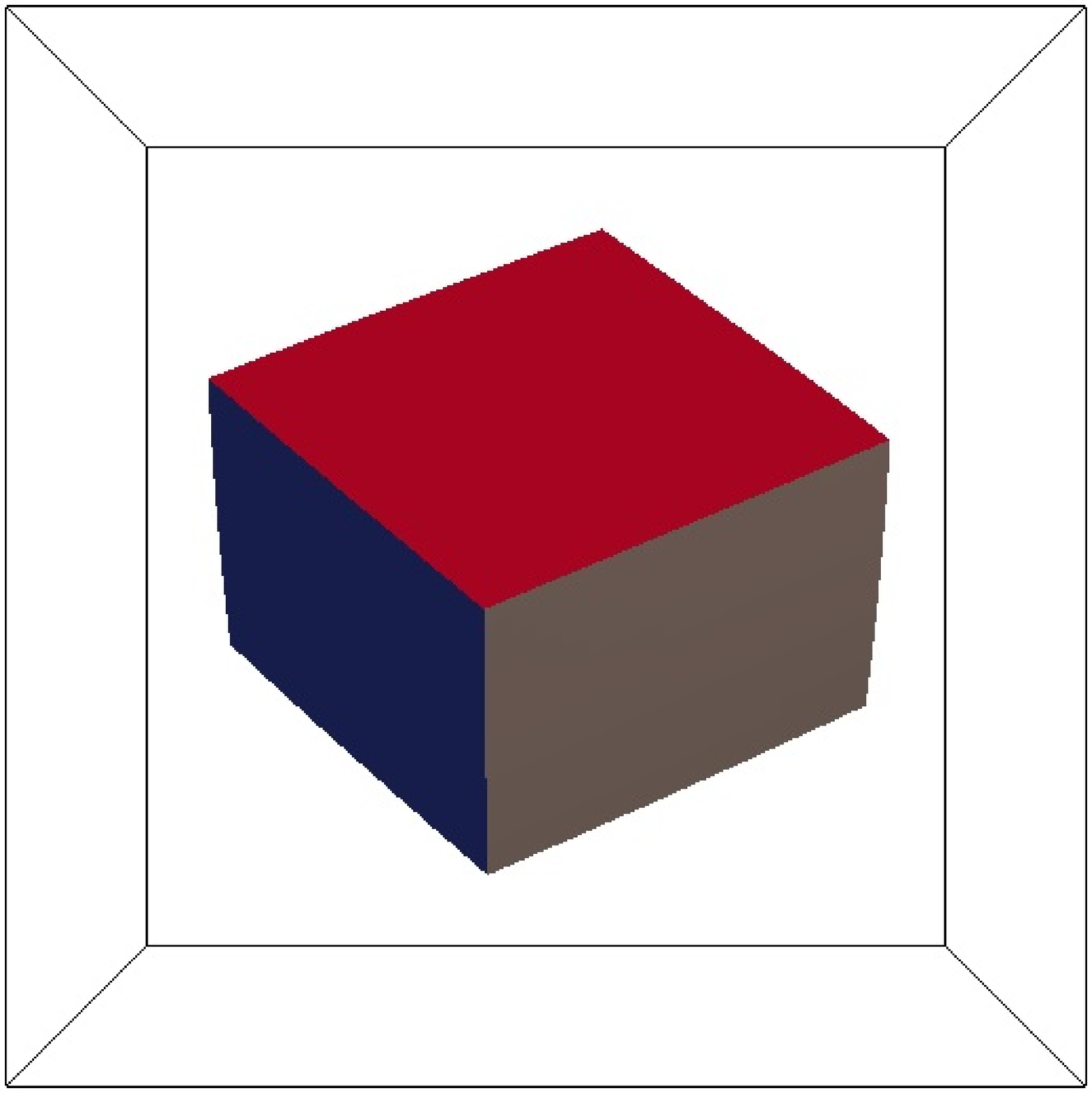}
\end{subfigure}
\caption{Resulting shape in case of the initial guess is 
$\boldsymbol{T}\mathcal{C}$ and the desired tensor $\boldsymbol{B}_3$.}
\label{fig:B_3|TC}
\end{figure}

\begin{figure}[hbt]
\begin{subfigure}{0.32\textwidth}
\includegraphics[scale=0.19]{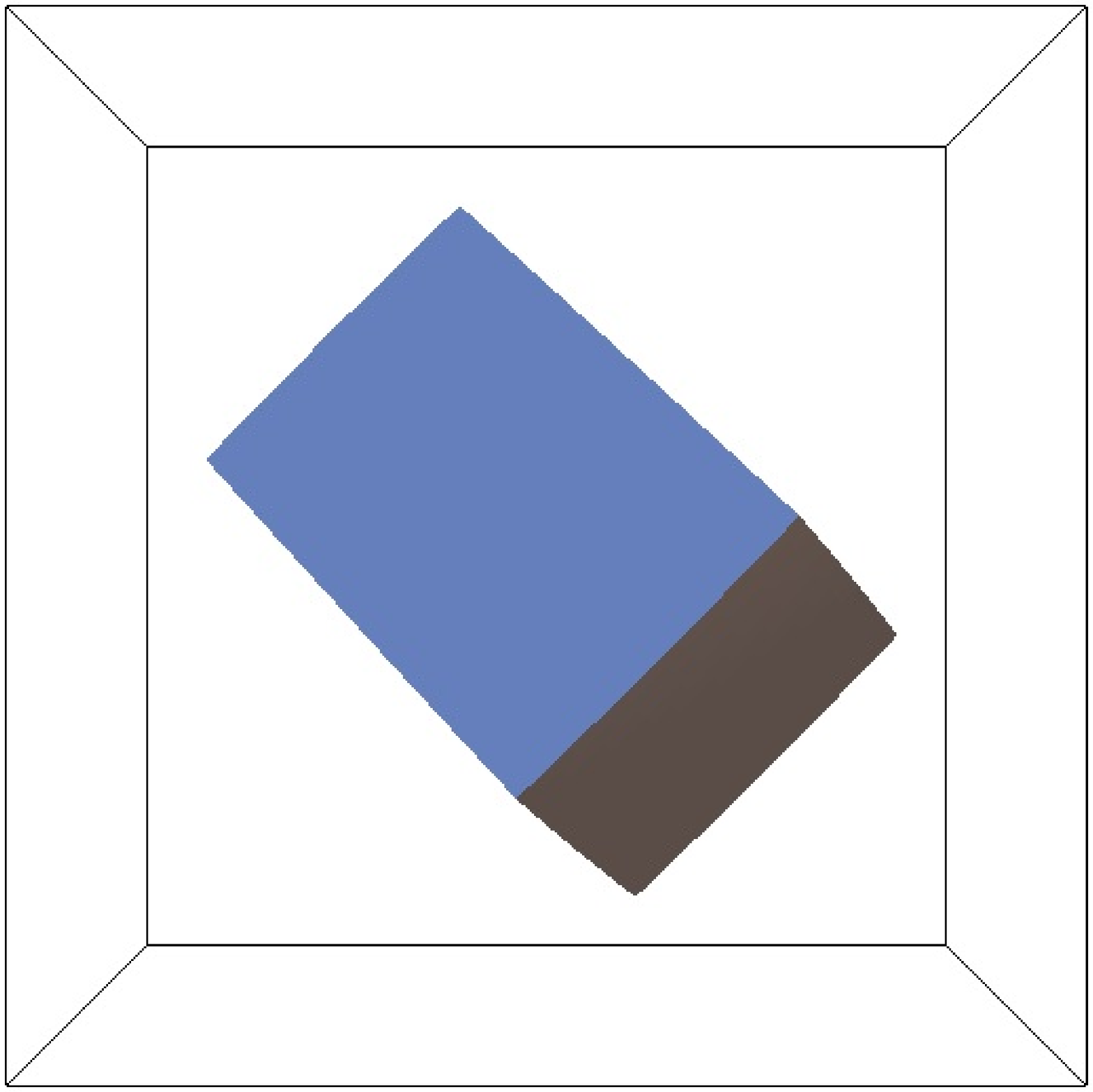}
\end{subfigure}
\begin{subfigure}{0.32\textwidth}
\includegraphics[scale=0.19]{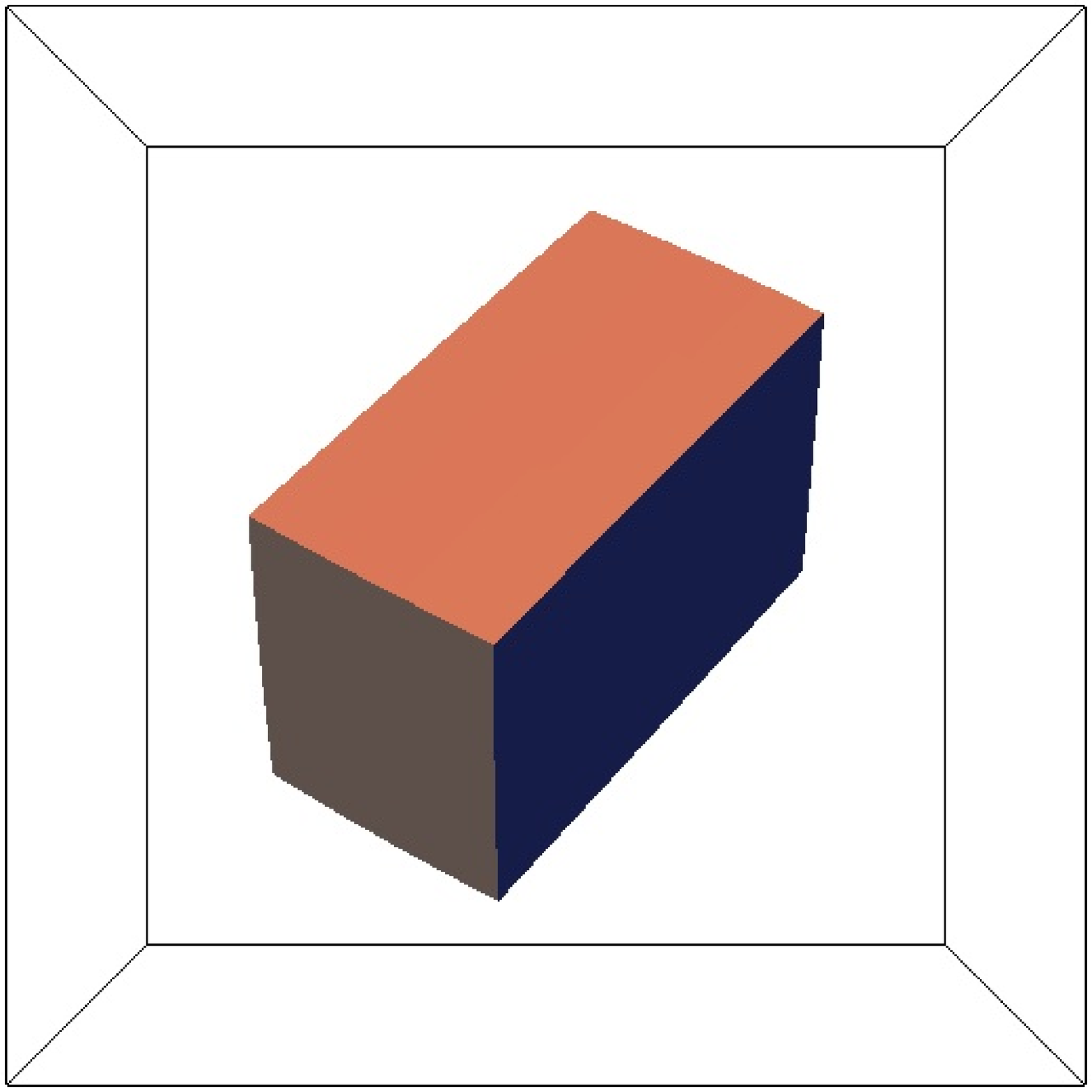}
\end{subfigure}
\begin{subfigure}{0.32\textwidth}
\includegraphics[scale=0.19]{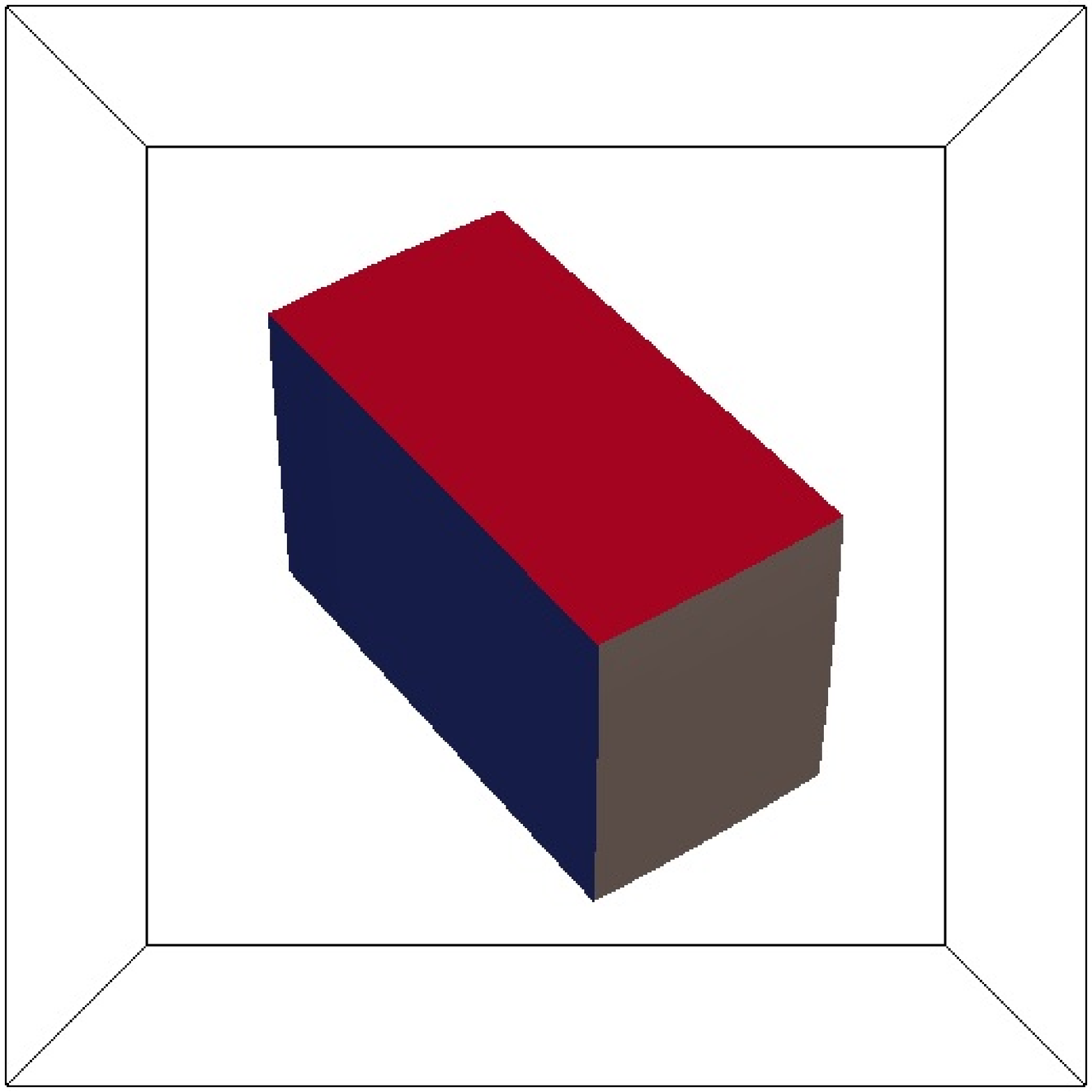}
\end{subfigure}
\caption{Resulting shape in case of the initial guess is 
$\boldsymbol{T}\mathcal{C}$ and the desired tensor $\boldsymbol{B}_4$.}
\label{fig:B_4|TC}
\end{figure}

\subsection{Sphere and cube}
\label{section_sphere_cube}
Another important configuration we investigate is the situation of
two 
initial shapes which are not connected but placed at different 
locations in the micro cell $Y$. To study this case, we define
the geometry
\[
\mathcal{D} = B_{0.15}\big(-[0.25,0.25,0.25]^\intercal\big)\cup[0.175,0.325]^3.
\]
The desired material tensor under consideration is the diagonal
matrix
\[
\boldsymbol{B}_5 = \begin{bmatrix}1 & & \\ & 0.995 & \\ &  &0.99\end{bmatrix}.
\]
As can be seen from Figure \ref{figure_normal_ell}, using $\mathcal{D}$
as initial guess, the geometries do not just deform as before, but rather 
affect each other. Especially the cube is stretched towards the (deformed) 
sphere.

\begin{figure}[hbt]
\begin{subfigure}{0.32\textwidth}
\includegraphics[scale=0.19]{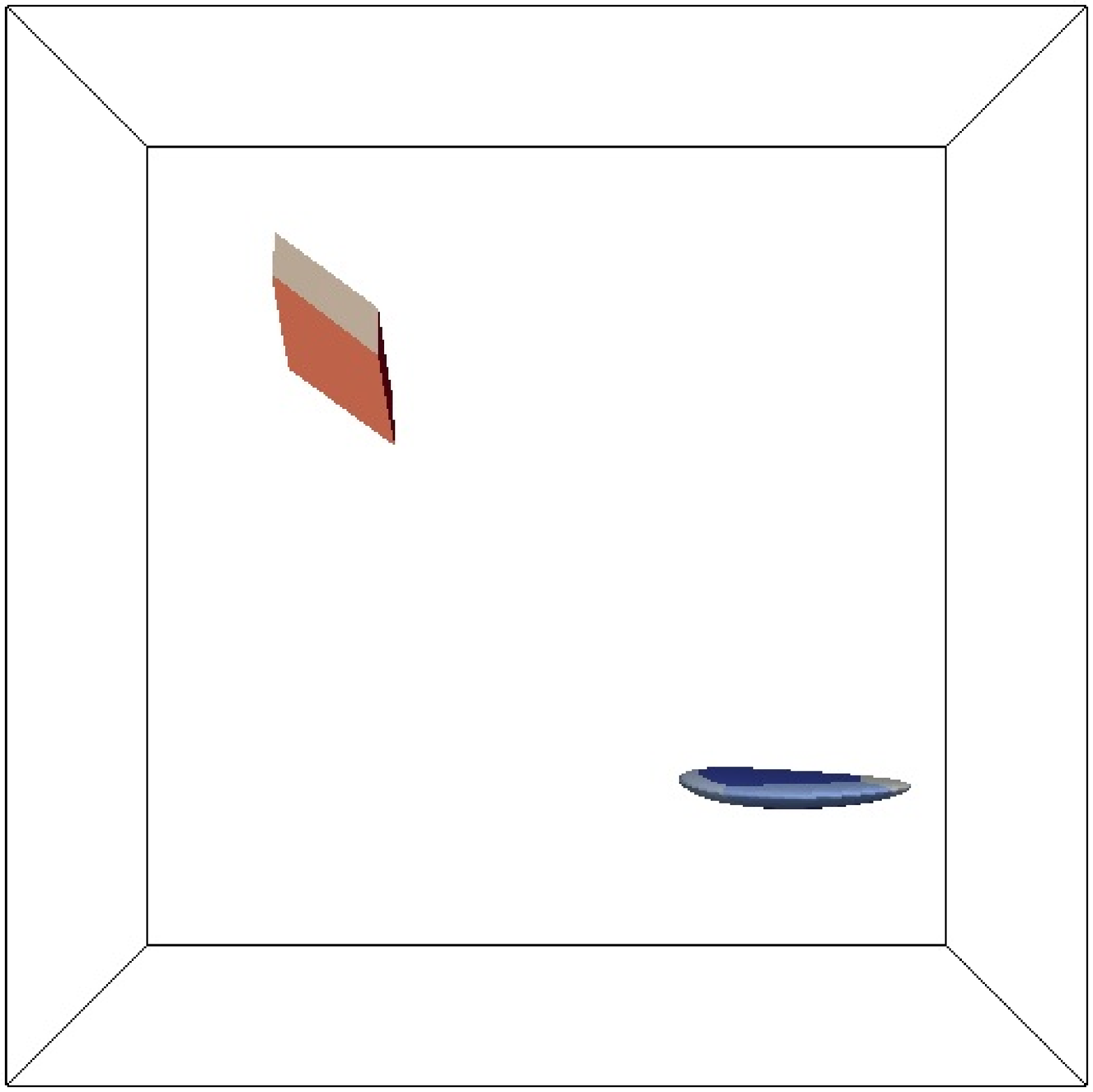}
\end{subfigure}
\begin{subfigure}{0.32\textwidth}
\includegraphics[scale=0.19]{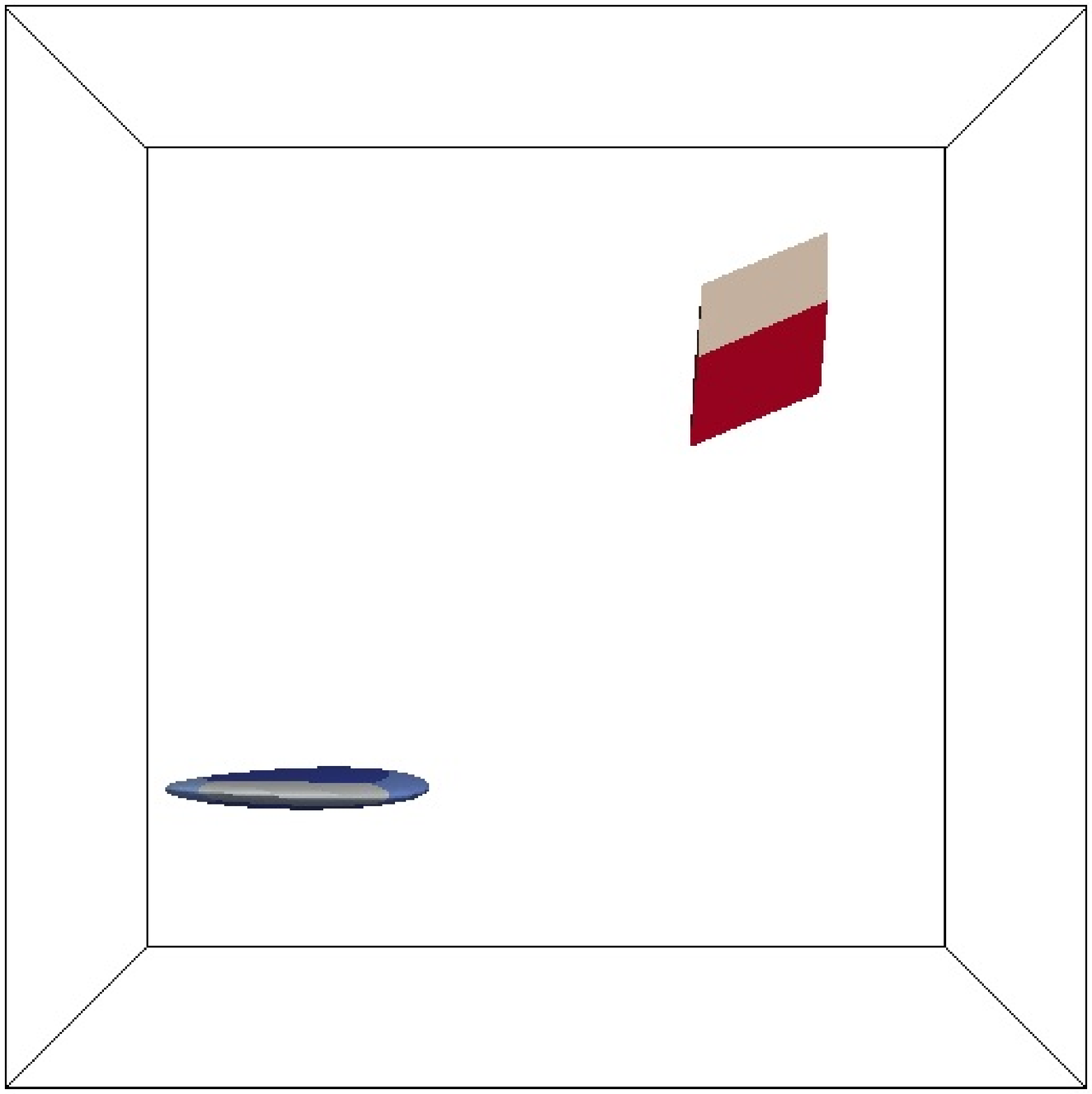}
\end{subfigure}
\begin{subfigure}{0.32\textwidth}
\includegraphics[scale=0.19]{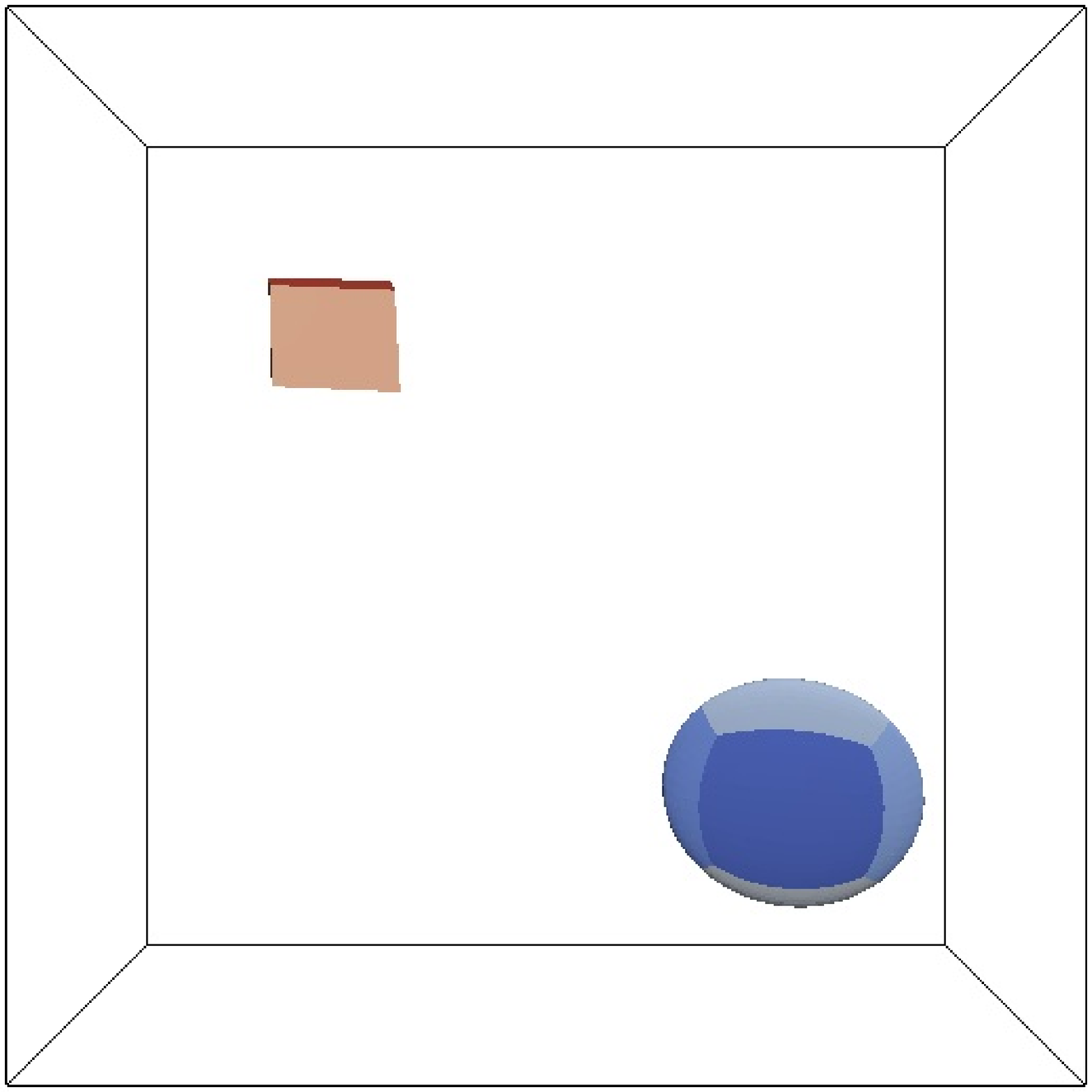}
\end{subfigure}
\caption{Desired tensor $\boldsymbol{B}_5$, initial guess $\mathcal{D}$, 
$\ell = 1$. Convergence is achieved after 12 iterations, computed on the fourth refinement level.}
\label{figure_normal_ell}
\end{figure}

Instead of considering different desired tensors as in the examples 
before, we follow a different approach here. Recall that the 
construction of the displacement fields depends on the 
Mat\'ern kernel \eqref{eq:matern}, where $\ell>0$ denotes 
the correlation length. If $\ell$ is chosen small, then the 
two shapes act more like two different geometries, compare 
Figure \ref{figure_small_ell}. However, $p=16$ are too 
few displacement fields to decrease the value of the 
objective functional within a reasonable amount of iterations.
Hence, we rather work with $p=50$ displacement fields here, 
resulting in a rapid convergence. Note that the number
of iterations to reduce the value of the shape functional
below $10^{-5}$ has been 12 when $\ell=1$ and 8 when 
$\ell=1/4$. 

\begin{figure}[hbt]
\begin{subfigure}{0.32\textwidth}
\includegraphics[scale=0.19]{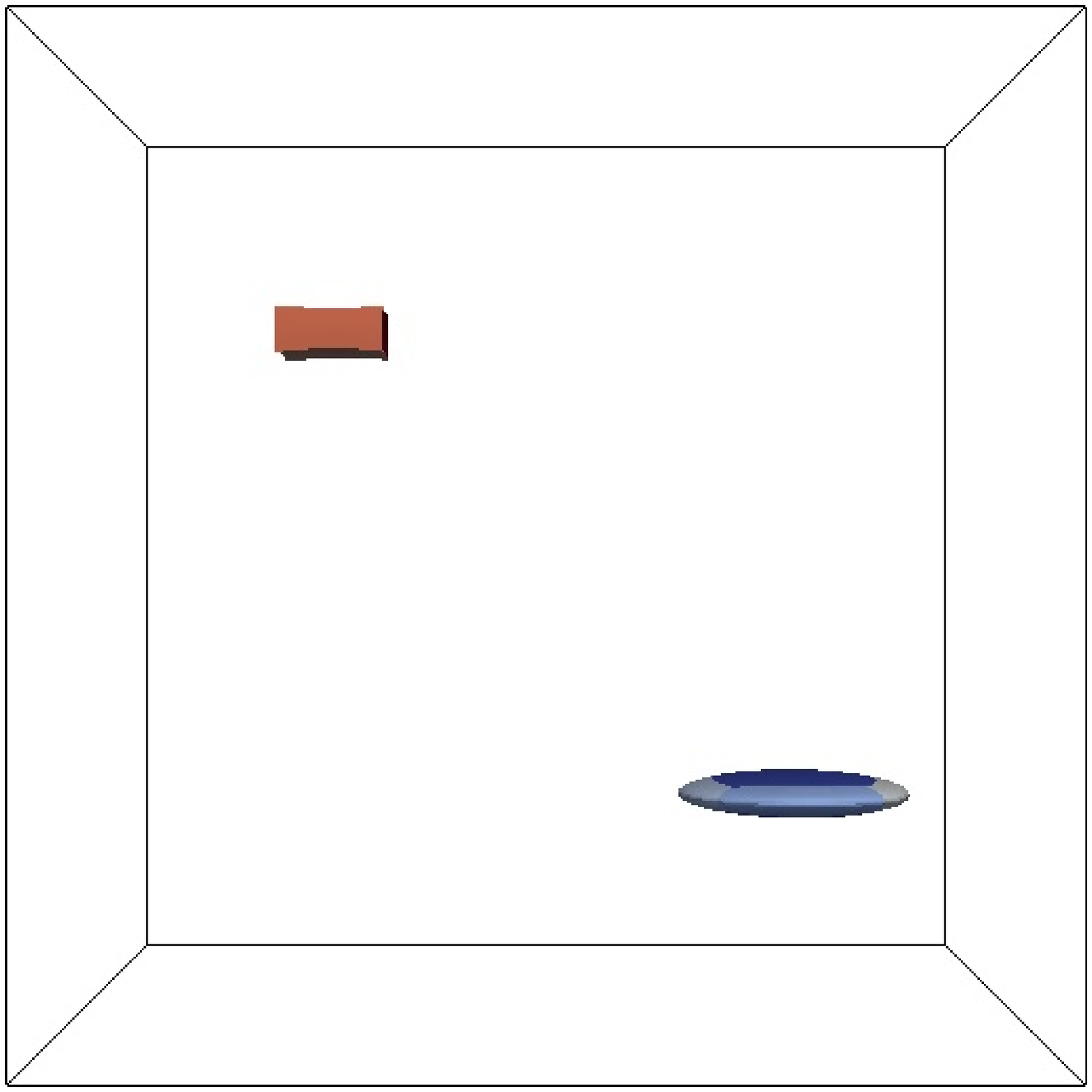}
\end{subfigure}
\begin{subfigure}{0.32\textwidth}
\includegraphics[scale=0.19]{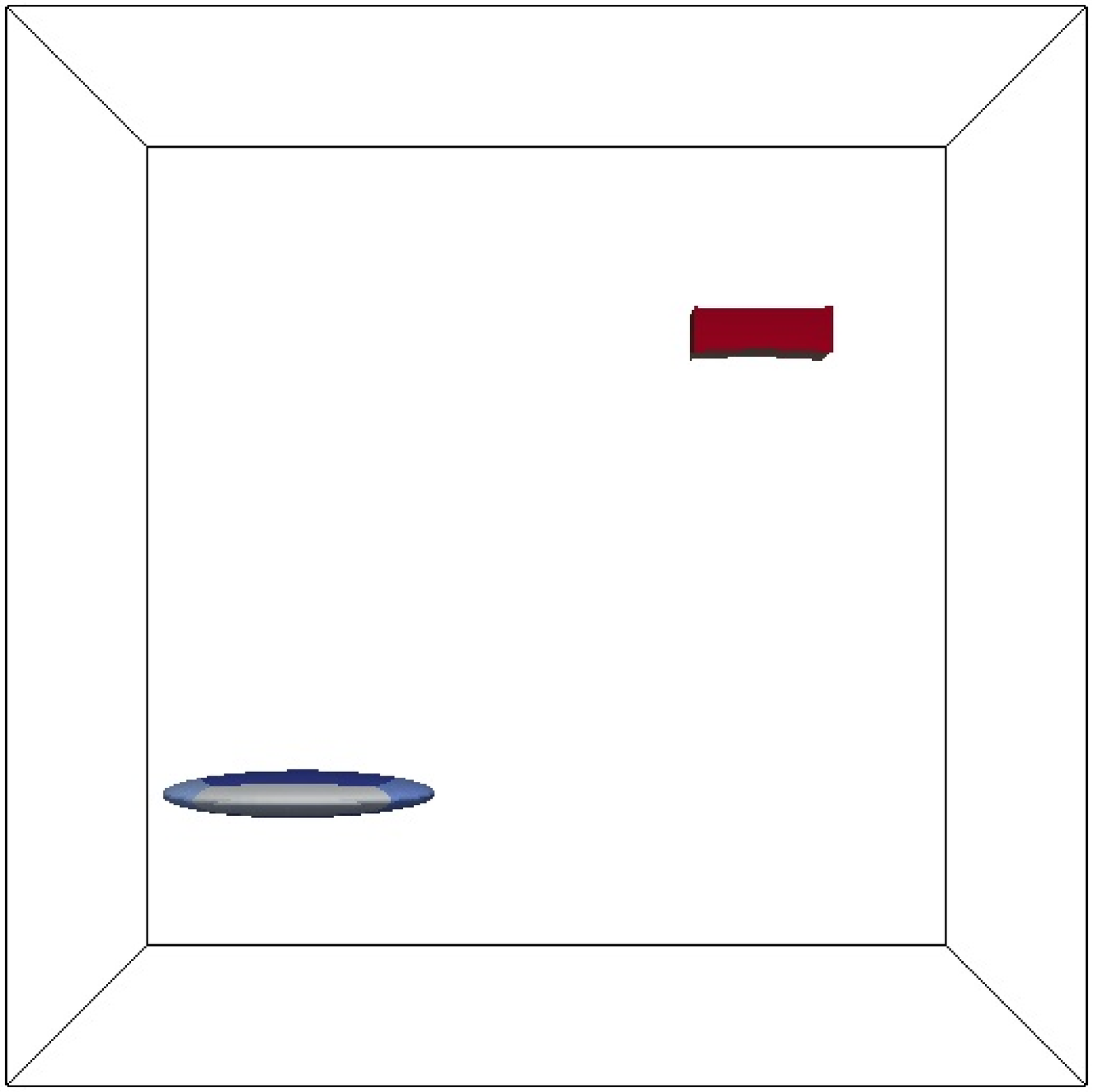}
\end{subfigure}
\begin{subfigure}{0.32\textwidth}
\includegraphics[scale=0.19]{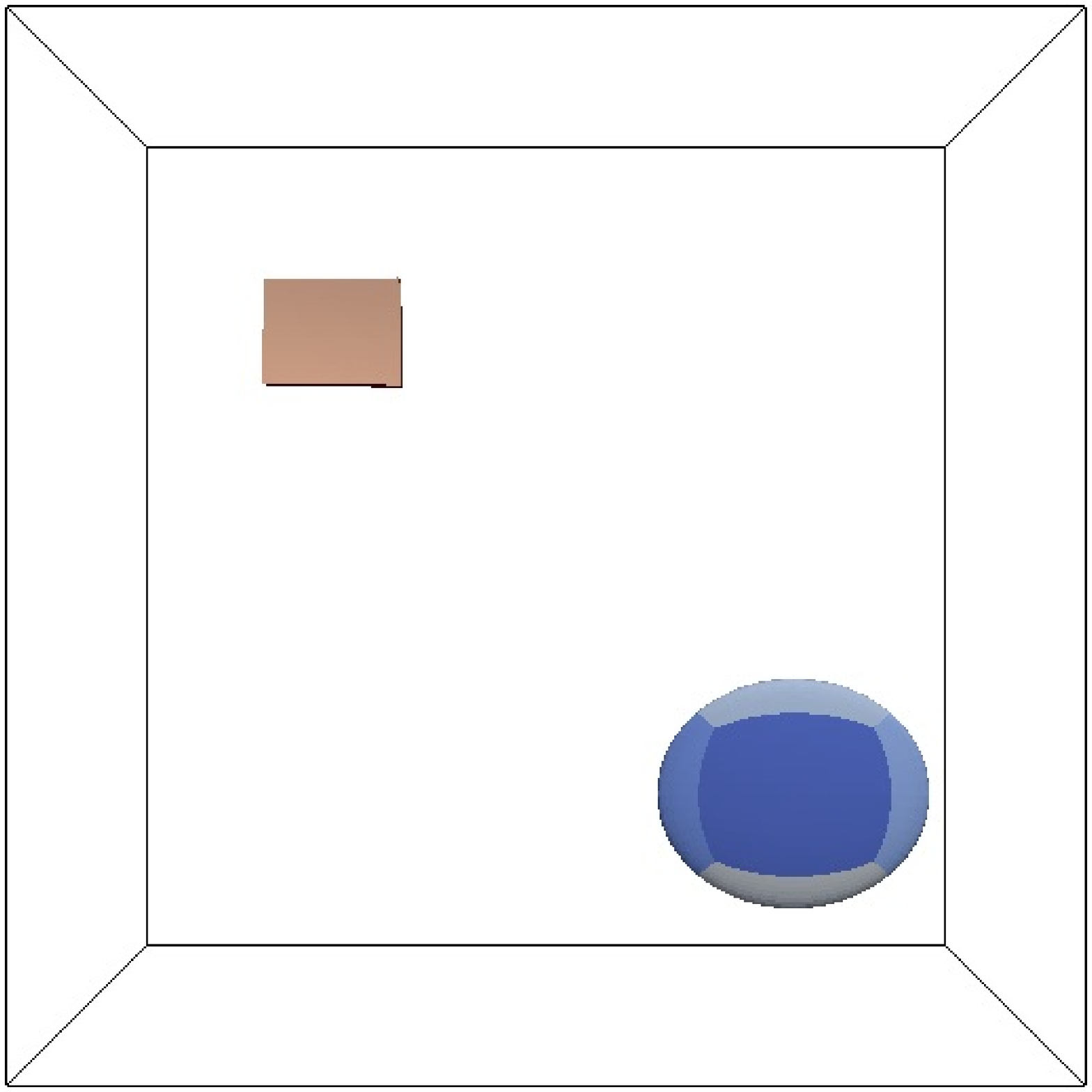}
\end{subfigure}
\caption{Desired tensor $\boldsymbol{B}_3$, initial guess $\mathcal{D}$, 
$\ell = 1/4$. Convergence is achieved 
after 8 iterations, computed on the fourth refinement level.}
\label{figure_small_ell}
\end{figure}

\subsection{Drilled cube}\label{sct:drilled}
\label{section_drilled_cube}
In our final example, we consider a more complex geometry
as initial guess, namely a cube $[-0.3,0.3]^3$, where
three holes of diameter 0.15 have been drilled into, 
compare Figure~\ref{fig:toy}. Notice that the parametrization 
of the cube is represented by 48 patches. 

\begin{figure}[hbt]
\includegraphics[width=4.5cm,clip,trim=1200 50 900 350]{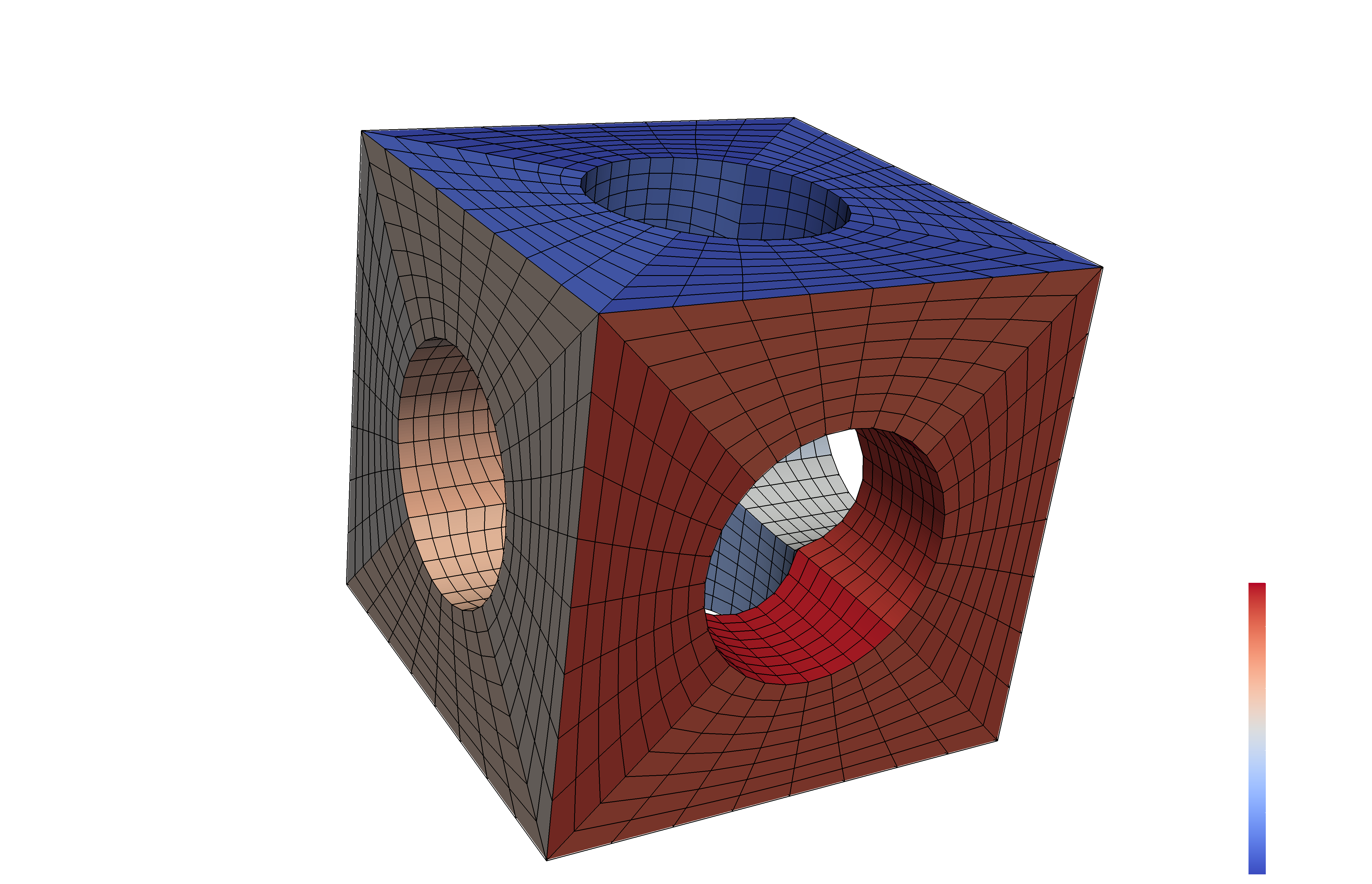}
\caption{The parametrization of the drilled cube by 48 patches
with mesh on refinement level 2.}
\label{fig:toy}
\end{figure}

We choose the desired effective tensor
\[\boldsymbol{B}_6 = 
\begin{bmatrix}0.82 & & \\ & 0.78 & \\ & & 0.74\end{bmatrix}
\] and employ 
$p=200$ displacement fields. We require 8 gradient descent
steps to reduce the value of the shape functional 
below $10^{-5}$. The resulting shape is illustrated in 
Figure~\ref{fig:toy_result_1}. It can be observed that, 
instead of changing the size of the cube, the size of 
the holes is changed in order to account for the 
anisotropic effective tensor.

\begin{figure}[hbt]
\begin{subfigure}{0.32\textwidth}
\includegraphics[scale=0.19]{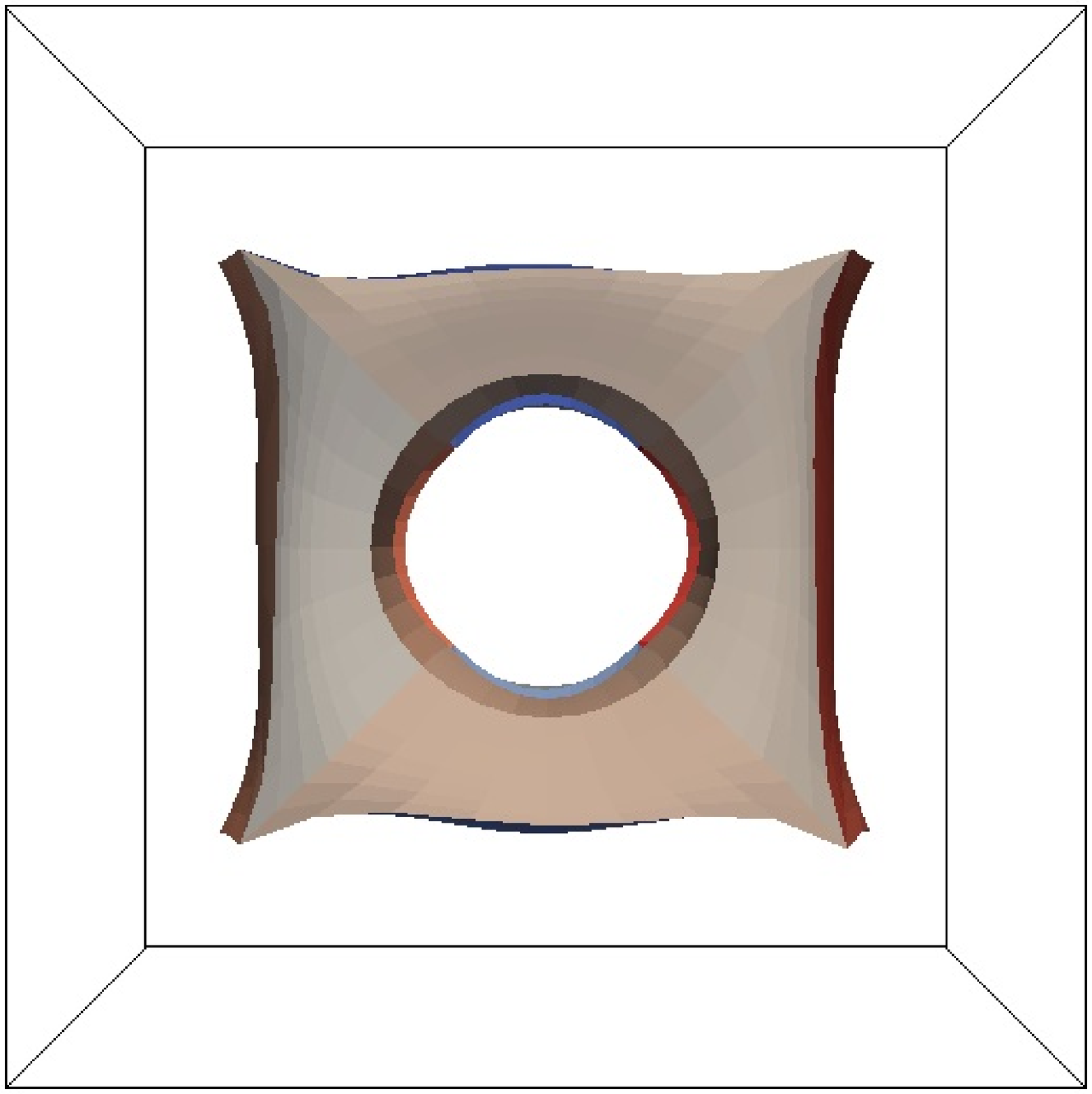}
\end{subfigure}
\begin{subfigure}{0.32\textwidth}
\includegraphics[scale=0.19]{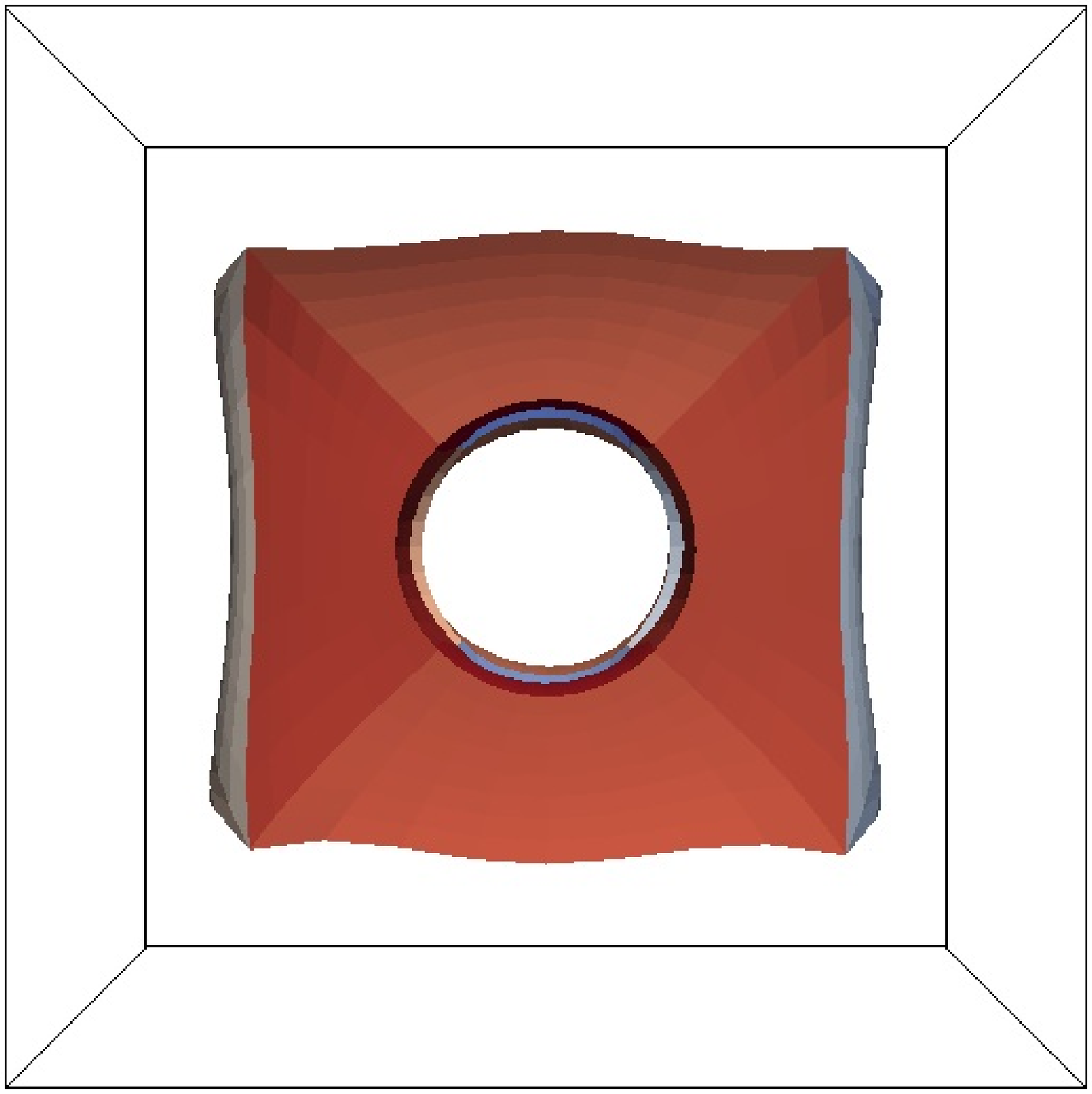}
\end{subfigure}
\begin{subfigure}{0.32\textwidth}
\includegraphics[scale=0.19]{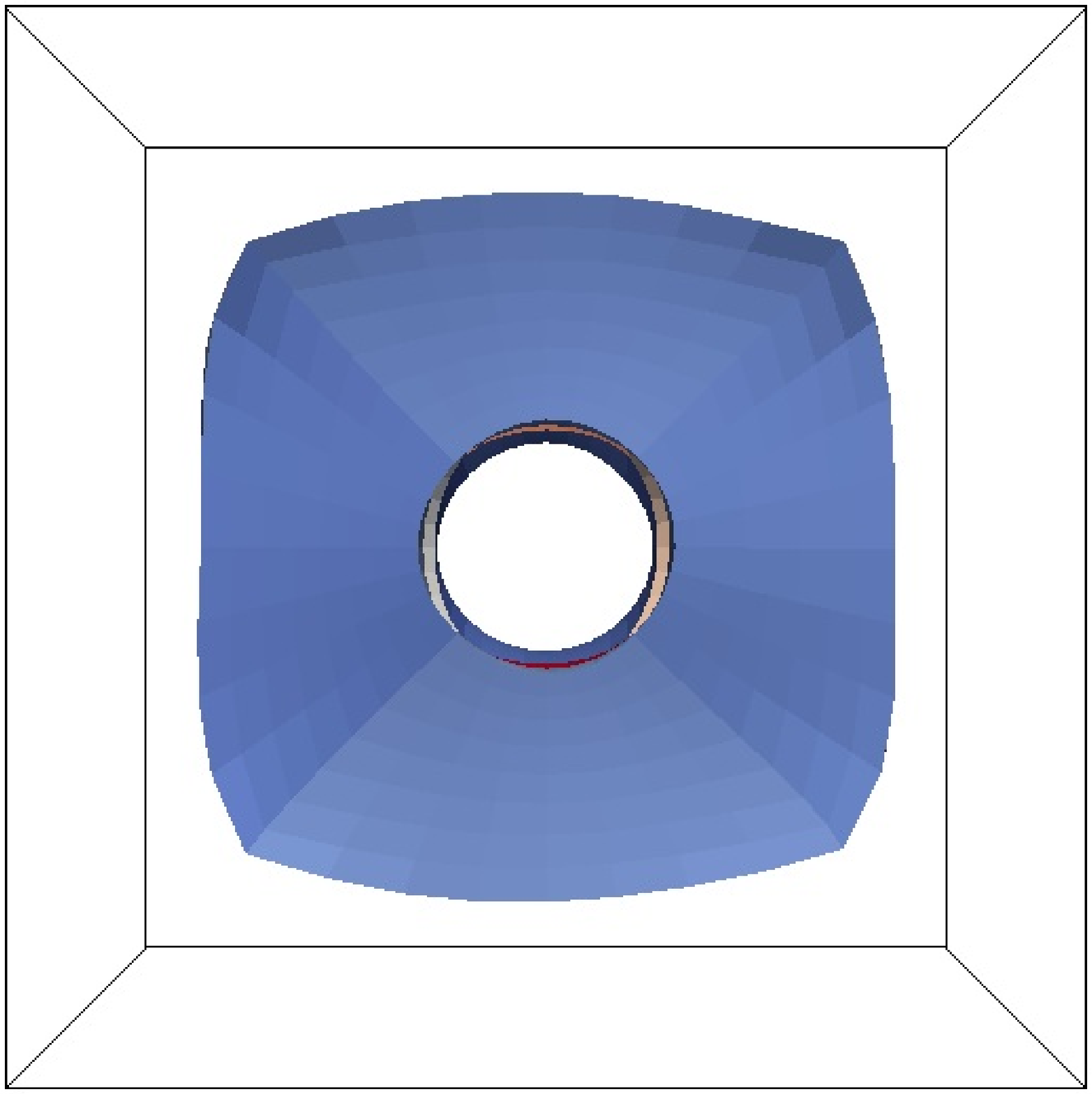}
\end{subfigure}
\caption{Drilled cube as initial guess for the desired 
tensor $\boldsymbol{B}_6$.
The number of deformation vectors is $p=200$ and 
convergence is achieved after 8 gradient steps.}
\label{fig:toy_result_1}
\end{figure}

If we reduce the number of shape deformation
vectors to $p=50$, then the result is different. The latter
situation is depicted in Figure~\ref{fig:toy_result_2}. Here,
we observe that the drilled cube (including the drill holes) 
is basically just rescaled to a cuboid which reflects the 
desired anisotropic effective tensor.

\begin{figure}[hbt]
\begin{subfigure}{0.32\textwidth}
\includegraphics[scale=0.19]{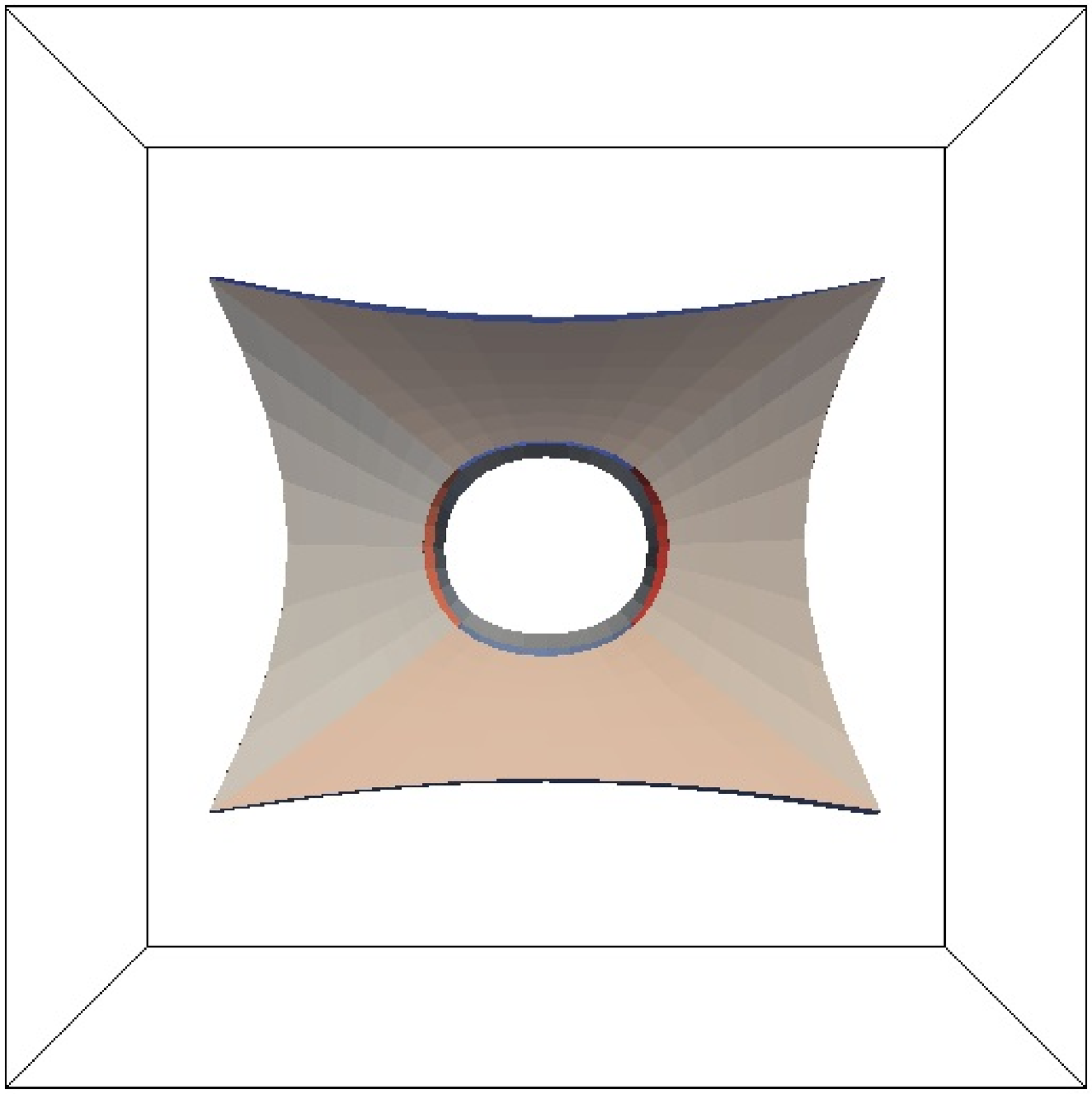}
\end{subfigure}
\begin{subfigure}{0.32\textwidth}
\includegraphics[scale=0.19]{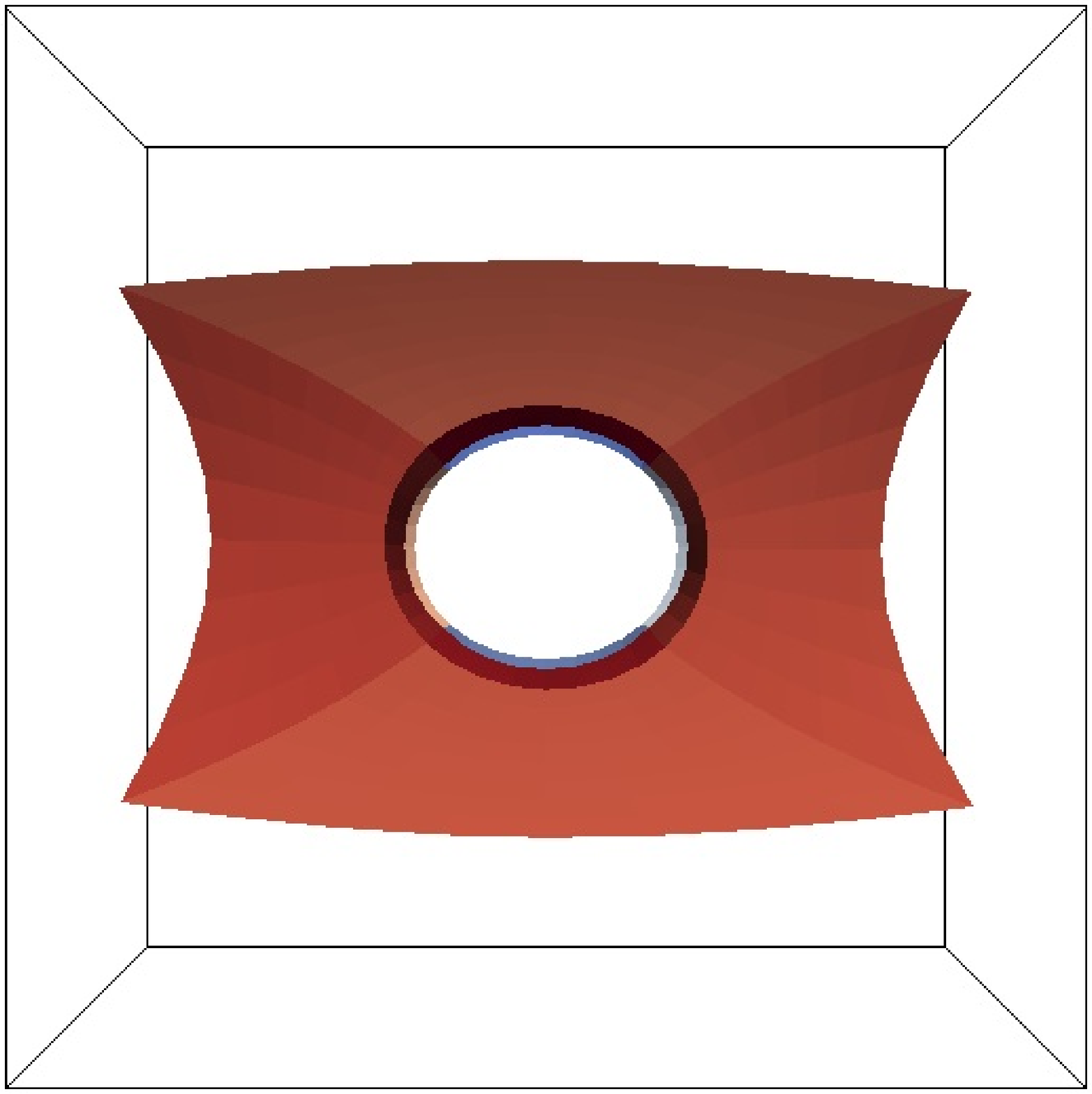}
\end{subfigure}
\begin{subfigure}{0.32\textwidth}
\includegraphics[scale=0.19]{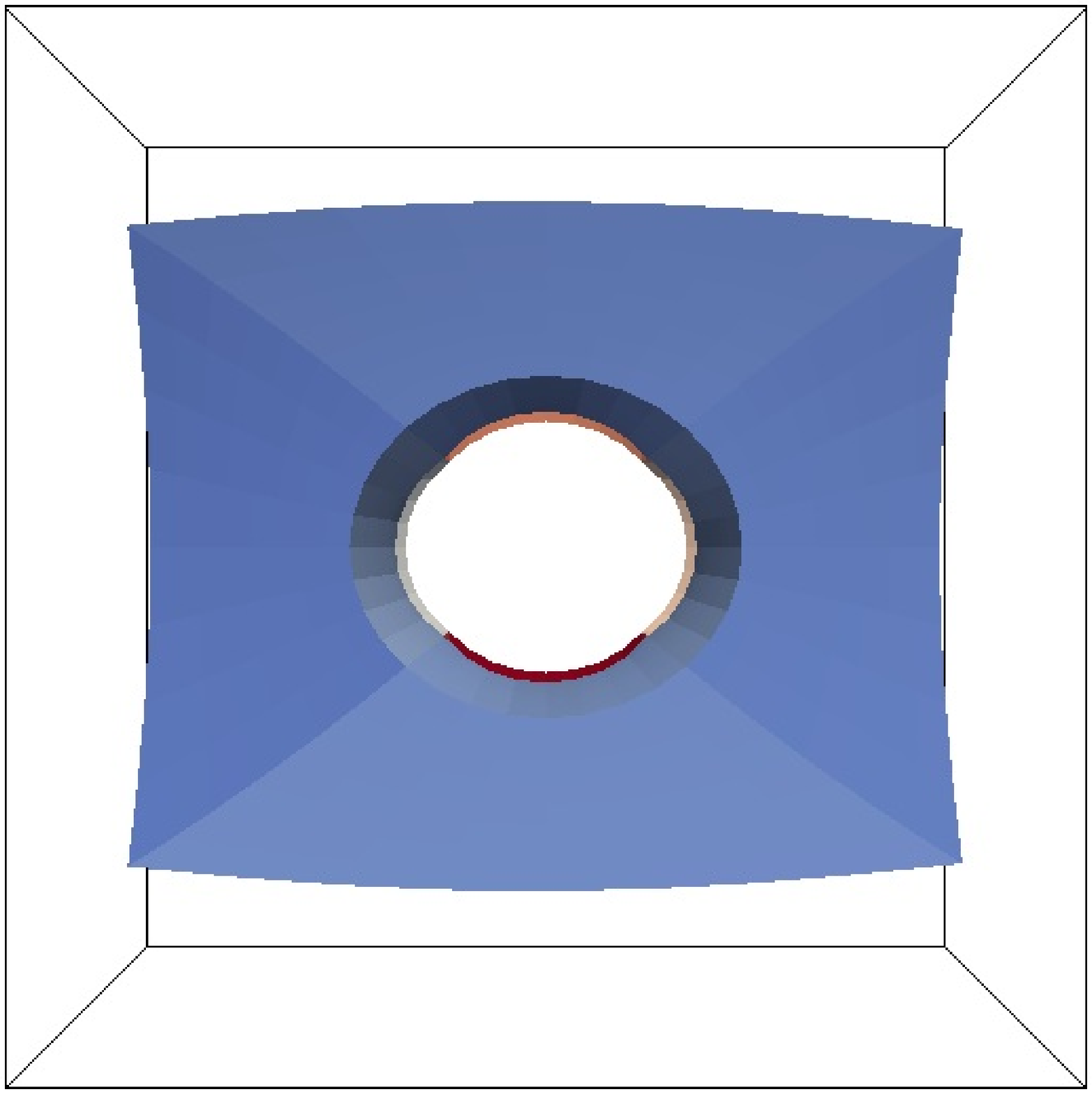}
\end{subfigure}
\caption{Drilled cube as initial guess for the desired 
tensor $\boldsymbol{B}_6$.
The number of deformation vectors is $p=50$ and 
convergence is achieved after 12 gradient steps.}
\label{fig:toy_result_2}
\end{figure}

\section{Conclusion}\label{sct:conclusion}
In this article, we have presented an isogeometric approach to the
shape optimization of scaffold structures. By defining the shape 
deformations with the help of a covariance kernel, we are able 
to optimize also non-smooth shapes with edges and corners
as well as disconnected shapes and shapes of genus larger 
than 0 without any additional effort. This implies a huge progress 
compared to previous approaches in shape optimization.
In combination with an isogeometric boundary element 
method, we arrive hence at a powerful new methodology 
for the numerical solution of shape optimization problems.

\end{document}